\newtheorem{theorem}{Theorem}
\newtheorem{definition}{Definition}[section]
\newtheorem{lemma}{Lemma}
\newtheorem{pro}[definition]{Proposition}
\newtheorem{corollary}[definition]{Corollary}
\numberwithin{equation}{section}
\begin{document}

\title{\textbf{On $W_2$-Lifting of Frobenius of Algebraic Surfaces}}
\date{September 2, 2013}
\author{He Xin}
\address{Institut f\"{u}r Mathematik, Universit\"{a}t Mainz, Mainz, 55099, Germany}
\email{hexin1733@gmail.com}
\maketitle

\begin{abstract} We completely decide which minimal algebraic surfaces in positive characteristics allow a lifting of their Frobenius over the truncated Witt rings of length 2.
\end{abstract}
\fontfamily{cmr}
\thispagestyle{empty}

\section{Introduction}
\noindent To have Frobenius morphism makes schemes in characteristic $p$ distinct from those in characteristic 0. For a scheme in characteristic $p$ of dimension $r$, the Frobenius morphism defined on it is a finite morphism of degree $p^r$. However, if the base field is of characteristic 0, Beauville proved in \cite{beauville2001endomorphisms} that a smooth hypersurface in $\mathbb{P}^m$ of degree $d$
has no endomorphism of degree larger than 1 provided $m,d\geq3$.

\medskip One can show easily Beauville's result combined with Zariski's main theorem implies the following corollary. Suppose we are given a smooth hypersurface in mixed characteristic satisfying the numerical conditions above, then the Frobenius morphism on the special fiber cannot be lifted to a finite endomorphism of the whole hypersurface. In spite of this implication, one can ask further whether the Frobenius morphism can be lifted to the truncated Witt rings. To be more precise, let $X$ be a projective smooth hypersurface over an algebraically closed field $k$ with positive characteristic and $n\geq2$, is there a flat lifting $X_n$ of $X$ over $W_n(k)$ and a morphism $F_{X_n}:X_n\rightarrow X_n$ such that the restriction of $F_{X_n}$ to $X$ coincides with $F_X$?

\medskip It is a general belief that such liftings rarely exist, however it is difficult to verify such judgement for a given variety even when $n=2$. If $X$ a projective smooth curve, it is easy to prove the liftability of $F_X$ over $W_2(k)$ implies the genus of $X$ cannot be $\geq2$. A similar discussion given in \cite{dupuy2012positivity} shows that there exist no liftings of Frobenius to mixed characteristic for a variety with Kodaira dimension $\geq1$. For varieties with negative Kodaira dimension, the only known proved cases seem to be certain classes of flag varieties \cite{buch1997frobenius} based on Bott non-vanishing theorems for these varieties. It is noticeable that the result in \cite{buch1997frobenius} strengthens earlier results in \cite{paranjape1989self}, where such flag varieties over $p$-adic numbers are proved to have no endomorphisms lifting the Frobenius.  On the positive side, the only known varieties with liftable Frobenius seem to be ordinary varieties with trivial tangent bundles \cite{mehta1987varieties} and toric varieties \cite{buch1997frobenius}.

\medskip It is the aim of this paper to investigate this problem for minimal algebraic surfaces. We obtain a complete answer to the above question for $n=2$ in this situation.

\begin{theorem}\label{mt} Let $X$ be a smooth projective minimal algebraic surface over a field $k$ of characteristic $p$, then the Frobenius of $X$ is liftable over $W_2(k)$ iff $X$ belongs to one of the following classes.
\begin{enumerate}
\item $\kappa(X)=0$
\begin{enumerate}
\item Ordinary abelian surfaces,\\
\item\vspace{-10pt} Ordinary hyperelliptic surfaces of type a), b), c), or d) if $p\neq2, 3$ and type a) if $p=2$ or $3$ such that $\omega^{\otimes(p-1)}_{X/k}\cong\mathcal{O}_X$.\\
\end{enumerate}
\item\vspace{-5pt} $\kappa(X)=-1$
\begin{enumerate}
\item $\mathbb{P}^2$, $\mathbb{P}(\mathcal{O}_{\mathbb{P}^1}\oplus\mathcal{O}_{\mathbb{P}^1}(n))$, $n\geq0, n\neq1$.\\
\item\vspace{-10pt} Ruled surfaces over an ordinary elliptic curve $C$.
\end{enumerate}
\end{enumerate}
\end{theorem}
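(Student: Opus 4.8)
The plan is to extract from a single construction a short list of numerical and cohomological necessary conditions, match them against the Enriques--Kodaira classification, and then construct explicit liftings in the surviving cases.

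\medskip

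First I would build the obstruction-theoretic engine. Suppose $F_X$ lifts, so there is a flat $\tilde X/W_2(k)$ and a lift $\tilde F\colon \tilde X\to \tilde X^{(p)}$ of the relative Frobenius $F=F_{X/k}$. Since $\tilde F^{\#}(\tilde a)\equiv a^p\pmod p$, the map $\tilde F^{*}$ on differentials is divisible by $p$, and reducing $\tfrac1p\tilde F^{*}$ modulo $p$ gives a canonical $\mathcal O_X$-linear map
$$\psi\colon F^{*}\Omega^1_{X^{(p)}}\longrightarrow \Omega^1_X,\qquad C\circ\psi=\mathrm{id},$$
where $C$ is the Cartier operator, so that $\psi$ sections $C$ and is generically an isomorphism. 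Taking determinants and using $F^{*}\omega_{X^{(p)}}\cong\omega_X^{\otimes p}$ produces a nonzero section
$$s=\det\psi\in H^0\!\big(X,\omega_X^{\otimes(1-p)}\big).$$
This yields two necessary conditions. (N1): $\omega_X^{\otimes(1-p)}$ is effective. (N2): if $\omega_X$ is torsion then $\omega_X^{\otimes(1-p)}$ is a torsion bundle with a section, hence trivial with $s$ nowhere vanishing, so $\psi$ is an isomorphism and $\Omega^1_X\cong F^{*}\Omega^1_{X^{(p)}}$; comparing second Chern classes via $\deg c_2(F^{*}\Omega^1_{X^{(p)}})=p^2\,e(X)$ then forces $e(X)=c_2(X)=0$. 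I would also record that a $W_2$-lift of $F_X$ forces $X$ to be ordinary, the lift strengthening the Deligne--Illusie splitting so that Frobenius acts bijectively on $H^{*}(X,\mathcal O_X)$.

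\medskip

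The ``only if'' direction is then a case check. For a minimal surface with $\kappa(X)\geq0$ the canonical bundle is nef, so (N1) forces $-(p-1)K_X$ effective and nef-trivial, hence $K_X\equiv0$; thus $\kappa(X)\geq1$ is impossible, recovering the obstruction of \cite{dupuy2012positivity}. Among surfaces with $K_X\equiv0$, conditions (N1) and (N2) eliminate K3 surfaces ($e=24$) and all Enriques surfaces ($e=12$, the classical characteristic $2$ case being killed instead by (N1) since $\omega_X^{-1}$ is a nontrivial $2$-torsion bundle), leaving only abelian and (quasi-)hyperelliptic surfaces, for which $e(X)=0$. Ordinarity removes the non-ordinary members; for hyperelliptic surfaces (N1) becomes exactly the stated torsion condition $\omega_X^{\otimes(p-1)}\cong\mathcal O_X$, with the characteristic $2,3$ restriction to type a) reflecting which families remain honestly hyperelliptic and ordinary there. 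In Kodaira dimension $-\infty$, $\omega_X$ is not torsion so (N2) does not apply; for a geometrically ruled $\pi\colon X=\mathbb P(E)\to C$ the ruling is canonical and $F_X$-equivariant, so a lift induces a lift of $F_C$, which by the curve case forces $g(C)\leq1$ and, when $g(C)=1$, forces $C$ ordinary.

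\medskip

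For the ``if'' direction I would exhibit liftings. The surfaces $\mathbb P^2$ and $\mathbb P(\mathcal O_{\mathbb P^1}\oplus\mathcal O_{\mathbb P^1}(n))$ are toric, so \cite{buch1997frobenius} supplies the canonical toric Frobenius lift; $\mathbb F_1$ is absent only because it is not minimal. Ordinary abelian surfaces have trivial tangent bundle and are covered by \cite{mehta1987varieties}, equivalently by lifting Frobenius along the Serre--Tate canonical lift. Ordinary hyperelliptic surfaces are quotients $(A\times B)/G$ of a product of elliptic curves by a finite group $G$; I would lift $A\times B$ to its canonical lift together with its Frobenius lift and then lift the $G$-action compatibly, the torsion condition on $\omega_X$ being precisely what permits the descent. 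Finally, over an ordinary elliptic curve $C$ one lifts $C$ to its canonical lift with a Frobenius lift, lifts the bundle $\mathbb P(E)$, and extends $\tilde F$ over the fibers.

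\medskip

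I expect the main difficulty to lie in two places. The first is making the equivalence criterion precise and proving that ordinarity is genuinely necessary, since this requires controlling simultaneously the deformation class of $\tilde X$ in $H^1(X,T_X)$ and the splitting of the Cartier sequence $0\to B^1_X\to Z^1_X\xrightarrow{C}\Omega^1_{X^{(p)}}\to0$, together with the descent of the ruling in the fibered case. The second, and in my view the harder, is the equivariant construction for hyperelliptic surfaces: lifting the $G$-action on $A\times B$ so that it commutes with the lifted Frobenius, where the set of admissible types genuinely depends on $p$, is what accounts for the delicate characteristic $2,3$ clauses in the statement.
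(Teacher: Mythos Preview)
Your overall architecture matches the paper's: build the map $\psi\colon F^*\Omega^1\to\Omega^1$ from a lifted Frobenius, extract numerical consequences, run the classification, then construct liftings in the surviving cases. Two of your tactical choices genuinely diverge from the paper and are worth flagging.

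For the $\kappa=0$ exclusions you use the Chern-class identity $c_2(F^*\Omega^1)=p^2c_2(\Omega^1)$ to force $e(X)=0$, which cleanly kills K3 ($e=24$) and Enriques ($e=12$). The paper instead deduces from the isomorphism $\psi$ that $\Omega^1_X$ is \'etale trivializable (Lange--Stuhler) and that $X$ contains no rational curve, then eliminates K3 via simple connectedness, Enriques via unirationality or the K3 cover, and quasi-hyperelliptic via their cuspidal rational fibres. Your numerical argument is slicker for K3 and Enriques, but it does \emph{not} dispose of quasi-hyperelliptic surfaces, which have $e(X)=0$. You fold them into ``ordinarity removes the non-ordinary members,'' but you have not argued that every quasi-hyperelliptic surface is non-ordinary, and this is not immediate; the paper's rational-curve criterion handles them in one line. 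This is a gap you should close, either by supplying the ordinarity argument or by importing the rational-curve obstruction.

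For the positive hyperelliptic cases you propose lifting the $G$-action on the Serre--Tate canonical lift of $A\times B$ so that it commutes with the lifted Frobenius. The paper takes a different and more direct route: since $T_X\cong\mathcal O_X\oplus\omega_X^{-1}$, the Mehta--Srinivas obstruction lives in $H^1(B^1_X)\oplus H^1(\omega_X^{-1}\otimes B^1_X)$, and both groups are shown to vanish by pushing $B^1$ down from the abelian or hyperelliptic cover and using the decomposition of $f_*\mathcal O_Y$ into torsion line bundles. Your equivariant approach is plausible but, as you yourself anticipate, the compatibility of the lifted $G$-action with the lifted Frobenius is exactly where the type/characteristic restrictions bite, and you have not indicated how you would verify it; the paper's cohomological vanishing avoids this entirely. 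The ruled-surface part of your sketch is essentially what the paper does.
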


\smallskip The proof of nonexistence of liftings of Frobenius for those minimal surfaces not appearing in the above list is done by contradiction, which is drawn out from a morphism induced by a lifted Frobenius, see Proposition \ref{gb}. The positive part of this theorem is obtained either by proving the vanishing of the obstruction space (1)(b), or constructing the liftings directly (2)(b).

\medskip In the preliminary section we give some definitions and prove some lemma which are necessary for later development. The contents of the remaining section gives the proof our main result. The division of this section is partially based on the classification of minimal algebraic surfaces in characteristic $p$, see for instance \cite[Appendix A]{beauville1996complex}.

\subsection*{Conventions and Notations}\hfill
\begin{enumerate}[]
\item $k$: an algebraically closed field of characteristic $p>0$.
\item $W(k)$: the Witt ring with coefficients in $k$.
\item $W_n(k)$: the truncated Witt ring of length $n$ with coefficients in $k$.
\item $\kappa(X)$: the Kodaira dimension of $X$.
\item $h^i(\mathcal{F})=\dim H^i(X,\mathcal{F})$.
\end{enumerate}

\section{Preliminaries}\hfill

\noindent The main objective of this section is to fix terminologies and prove several results, among which Proposition \ref{gb} will be used later as an obstruction for the existence of liftings of Frobenius. Along the way, we will also review some facts on deformation theory.

\begin{definition} Let $X$ be a smooth variety defined over $k$, a flat lifting of $X$ to $W_n(k)$ \textnormal{(}resp. $W(k)$\textnormal{)} is a scheme $\boldsymbol{X}$ which is flat over $W_n(k)$ \textnormal{(}resp. $W(k)$\textnormal{)} such that $\boldsymbol{X}\times_{W_n(k)}k\cong X$ \textnormal{(}resp. $\boldsymbol{X}\times_{W(k)}k\cong X$\textnormal{)}. $X$ is said to have a $W_n$-lifting \textnormal{(}resp. $W(k)$-lifting or a lifting to characteristic 0\textnormal{)} of its Frobenius, if there exists a flat lifting $\boldsymbol{X}$ of $X$ to $W_n(k)$ \textnormal{(}resp. $W(k)$\textnormal{)} and a morphism $F_{\boldsymbol{X}}:\boldsymbol{X}\rightarrow\boldsymbol{X}$ such that the following diagram is commutative

$$\xymatrix{X\ar[r]^i\ar[d]_{F_X}&\boldsymbol{X}\ar[d]^{F_{\boldsymbol{X}}}\\
            X\ar[r]^i            &\boldsymbol{X}
            }$$
\noindent where $i:X\rightarrow\boldsymbol{X}$ is the closed immersion defined by the ideal $(p)$.
\end{definition}

As the above definition manifests, to study liftability of Frobenius, the deformation theory of schemes and morphisms over local artin rings is an indispensable tool. Related results can be found in \cite[Theorem 5.9]{illusie2005grothendieck} and we reproduce it here for convenience.

\begin{pro}\label{dfm} Let $X$ be a $S$-scheme and $j: X_0\rightarrow X$ be a closed immersion defined by an ideal $J$ such that $J^2=0$. Let $Y$ be a smooth $S$-scheme and $g: X_0\rightarrow Y$ be an $S$-morphism. There is an obstruction
$o(g,j)\in H^1(X_0, J\otimes_{\mathcal{O}_{X_0}}g^*T_{Y/S})$ whose vanishing is necessary and sufficient for the existence of an S-morphism $h: X\rightarrow Y$
extending $g$, i.e. such that $hj=g$. When $o(g,j) = 0$, the set of extensions $h$ of $g$ is an
affine space under $H^0(X_0, J
\otimes_{\mathcal{O}_{X_0}}
g^*T_{Y/S})$.

\smallskip\noindent Let $S_0\rightarrow S$ be a closed immersion defined by an ideal $I$ of square 0. Let $X_0$ be an $S_0$-scheme, then there is an obstruction $o(X_0,i)\in H^2(X_0, f^*_0I\otimes T_{X_0/S_0})$ (where $f_0:X_0\rightarrow S_0$ is the structure morphism) whose vanishing is the sufficient and necessary condition for the existence of a deformation of $X_0$ over $S$. When $o(X_0,i)=0$, the set of isomorphic classes of such deformations is an affine space under $H^1(X_0, f^*_0I\otimes T_{X_0/S_0})$ and the automorphism group of a fixed deformation is an affine space under $H^0(X_0, f^*_0I\otimes T_{X_0/S_0})$.
\end{pro}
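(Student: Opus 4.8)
The plan is to prove both assertions by the same local-to-global argument, reducing the global lifting/deformation problem to a patching problem whose obstruction is measured by \v{C}ech cohomology of the relevant tangent sheaf. The two inputs I would isolate at the outset are: (i) a \emph{local existence} statement, coming from formal smoothness of the target (for the first part) or of $X_0$ over $S_0$ (for the second), guaranteeing that lifts or deformations exist over each member of a suitable affine open cover; and (ii) a \emph{torsor} statement, to the effect that whenever such local solutions exist their totality is an affine space under the sections of the appropriate sheaf. Granting these, the obstruction is assembled as the \v{C}ech class recording the failure of the local solutions to agree on overlaps.

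For the first part I would fix an affine open cover $\{U_i\}$ of the underlying space of $X$ (which equals that of $X_0$ since $J$ is nilpotent). Because $Y$ is smooth over $S$ and $j$ is a square-zero thickening, formal smoothness lets me extend $g|_{U_i}$ to an $S$-morphism $h_i\colon U_i\to Y$. On an overlap $U_{ij}$ the two extensions $h_i,h_j$ both restrict to $g$, so their difference is a derivation, i.e.\ an element $d_{ij}\in H^0(U_{ij}, J\otimes_{\mathcal{O}_{X_0}} g^*T_{Y/S})$; here one uses that $\Omega_{Y/S}$ is locally free, so that $g^*T_{Y/S}=\mathcal{H}om(g^*\Omega_{Y/S},\mathcal{O}_{X_0})$ and two lifts differ by a homomorphism $g^*\Omega_{Y/S}\to J$. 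The collection $\{d_{ij}\}$ is readily checked to be a $1$-cocycle, and its class $o(g,j)\in H^1(X_0, J\otimes g^*T_{Y/S})$ is independent of the choices. A global extension $h$ exists precisely when the $h_i$ can be corrected by derivations $e_i$ with $d_{ij}=e_i-e_j$, i.e.\ exactly when $o(g,j)=0$; and two global extensions then differ by a global section of $J\otimes g^*T_{Y/S}$, yielding the asserted $H^0$-torsor structure.

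The second part follows the same template one cohomological degree higher, the extra degree arising because one must now glue \emph{objects} rather than \emph{maps}. After choosing an affine cover $\{U_i\}$ of $X_0$, smoothness gives local deformations $\widetilde{U}_i$ over $S$ (affine smooth schemes deform, uniquely up to non-unique isomorphism). On $U_{ij}$ I choose an isomorphism of deformations $\phi_{ij}\colon \widetilde{U}_j|_{U_{ij}}\xrightarrow{\sim}\widetilde{U}_i|_{U_{ij}}$; these exist by local uniqueness but need not satisfy the cocycle condition. On a triple overlap the automorphism $\phi_{ij}\phi_{jk}\phi_{ki}$ is the identity on $X_0$, hence is given by a derivation, i.e.\ a section of $f_0^*I\otimes T_{X_0/S_0}$; these sections assemble into a $2$-cocycle whose class is $o(X_0,i)$. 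Its vanishing is equivalent to being able to modify the $\phi_{ij}$ into a genuine descent datum, hence to the existence of a global deformation. Once it vanishes, comparing two sets of gluing data shows the isomorphism classes form an $H^1$-torsor, while the automorphisms of a fixed deformation are the global derivations, i.e.\ $H^0$.

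The main obstacle is not any single construction but the verification that every class produced is canonical — independent of the cover, of the local lifts $h_i$ (resp.\ the local deformations $\widetilde{U}_i$ and isomorphisms $\phi_{ij}$), and compatible with refinement — so that ``the'' obstruction is well defined and the torsor actions are intrinsic. I would handle this either by a careful direct comparison of two sets of choices, producing the relevant coboundaries, or, more cleanly, by phrasing the whole argument through the cotangent complex $L_{X_0/S_0}$ and the functoriality of the exact triangle computing deformations, which packages the local existence, the torsor structure, and the independence of choices simultaneously. Smoothness enters in both places only to guarantee $L_{X_0/S_0}\simeq \Omega_{X_0/S_0}[0]$ with $\Omega_{X_0/S_0}$ locally free, so that the abstract $\mathrm{Ext}$-groups collapse to the sheaf-cohomology groups stated.
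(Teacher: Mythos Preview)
Your sketch is correct and follows the standard \v{C}ech-theoretic argument for these deformation results. Note, however, that the paper does not actually prove this proposition: it is quoted verbatim from Illusie's FGA notes (cited as \cite[Theorem 5.9]{illusie2005grothendieck}) and stated without proof as background. Your outline is precisely the classical argument one finds in such references, so there is nothing to compare against and nothing to correct. One small remark: the paper's statement of the second part omits the hypothesis that $X_0$ is smooth over $S_0$, which you (rightly) invoke for local existence and uniqueness of deformations and for identifying the obstruction groups with sheaf cohomology of $T_{X_0/S_0}$; this hypothesis is implicit in the paper's applications and in Illusie's formulation.
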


\begin{corollary}\label{lfec} Let $X, Y$ be smooth varieties over $k$ such that $Y$ is an \'{e}tale cover of $X$. If $F_X$ is liftable to $W_2(k)$, then $F_Y$ is liftable to $W_2(k)$.

\end{corollary}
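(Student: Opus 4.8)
The plan is to build the lifting of $F_Y$ directly out of the given lifting of $F_X$, exploiting the rigidity of \'{e}tale morphisms under infinitesimal thickenings. Write $\pi: Y\rightarrow X$ for the \'{e}tale cover, and let $\boldsymbol{X}$ together with $F_{\boldsymbol{X}}:\boldsymbol{X}\rightarrow\boldsymbol{X}$ be a flat $W_2(k)$-lifting of $X$ lifting $F_X$, which exists by hypothesis.

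First I would produce the lifted space $\boldsymbol{Y}$. Since $i:X\rightarrow\boldsymbol{X}$ is a closed immersion with square-zero ideal $(p)$, the topological invariance of the small \'{e}tale site furnishes an equivalence between \'{e}tale $\boldsymbol{X}$-schemes and \'{e}tale $X$-schemes; applied to $\pi$ it yields a unique \'{e}tale morphism $\boldsymbol{\pi}:\boldsymbol{Y}\rightarrow\boldsymbol{X}$ restricting to $\pi$ over $X$. As \'{e}tale morphisms are flat, $\boldsymbol{Y}$ is flat over $W_2(k)$, and by construction $\boldsymbol{Y}\times_{W_2(k)}k\cong\boldsymbol{Y}\times_{\boldsymbol{X}}X\cong Y$, so $\boldsymbol{Y}$ is a flat lifting of $Y$.

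Next I would lift the morphism itself. The absolute Frobenius is natural, hence $\pi\circ F_Y=F_X\circ\pi$; combined with the facts that $F_{\boldsymbol{X}}$ lifts $F_X$ and $\boldsymbol{\pi}$ lifts $\pi$ (so $\boldsymbol{\pi}\circ i=i_X\circ\pi$ and $F_{\boldsymbol{X}}\circ i_X=i_X\circ F_X$ for the reduction immersion $i_X:X\rightarrow\boldsymbol{X}$), this identity is exactly what makes the square with top arrow $i\circ F_Y:Y\rightarrow\boldsymbol{Y}$, left arrow $i:Y\rightarrow\boldsymbol{Y}$, right arrow $\boldsymbol{\pi}$, and bottom arrow $F_{\boldsymbol{X}}\circ\boldsymbol{\pi}$ commute. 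Regarding $\boldsymbol{Y}$ as a smooth (indeed \'{e}tale) $\boldsymbol{X}$-scheme via $\boldsymbol{\pi}$ on the target side and via $F_{\boldsymbol{X}}\circ\boldsymbol{\pi}$ on the source side, extending the $\boldsymbol{X}$-morphism $i\circ F_Y$ across the square-zero immersion $i$ to an $\boldsymbol{X}$-morphism $F_{\boldsymbol{Y}}:\boldsymbol{Y}\rightarrow\boldsymbol{Y}$ is governed by Proposition \ref{dfm}. Because $\boldsymbol{\pi}$ is \'{e}tale, the relative tangent sheaf $T_{\boldsymbol{Y}/\boldsymbol{X}}$ vanishes, so both the obstruction group $H^1\!\left(Y,(p)\otimes F_Y^*T_{Y/X}\right)$ and the group $H^0$ of the same coefficient sheaf are zero; hence the extension $F_{\boldsymbol{Y}}$ exists and is unique. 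Unwinding the two triangles, $F_{\boldsymbol{Y}}\circ i=i\circ F_Y$ says precisely that $F_{\boldsymbol{Y}}$ lifts $F_Y$, while $\boldsymbol{\pi}\circ F_{\boldsymbol{Y}}=F_{\boldsymbol{X}}\circ\boldsymbol{\pi}$ records that it covers $F_{\boldsymbol{X}}$.

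The argument is entirely formal once the rigidity of the \'{e}tale site is in hand: there is no genuine obstruction to overcome, since the relevant tangent sheaf vanishes identically. The only point demanding care is the bookkeeping of the structure morphisms, namely verifying that the naturality identity $\pi\circ F_Y=F_X\circ\pi$ is exactly what promotes $i\circ F_Y$ to an $\boldsymbol{X}$-morphism for the two different $\boldsymbol{X}$-structures carried by the source and the target, so that Proposition \ref{dfm} can be invoked.
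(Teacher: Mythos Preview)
Your proof is correct and follows essentially the same route as the paper's: both exploit the vanishing of the relative tangent sheaf $T_{Y/X}$ to kill the obstruction in Proposition~\ref{dfm}. The differences are cosmetic. For the lifting of $Y$, you invoke topological invariance of the \'{e}tale site while the paper appeals to the second part of Proposition~\ref{dfm}; for the lifting of the morphism, you lift $F_Y$ directly by equipping source and target with two different $\boldsymbol{X}$-structures, whereas the paper first factors $F_Y$ through the relative Frobenius $F_{Y/X}:Y\rightarrow Y\times_{X,F_X}X$ and lifts that instead. By the universal property of the fibre product these two formulations are equivalent, so the underlying argument is the same.
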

\begin{proof} Let $\boldsymbol{X}$ be a lifting of $X$ to $W_2(k)$ such that $F_X$ is liftable to $\boldsymbol{X}$. By the second part of the above proposition, there exists a lifting $\boldsymbol{Y}\rightarrow\boldsymbol{X}$ of $Y$ as an $X$-scheme. In particular, $Y$ is liftable to $W_2(k)$. To find a lifting of $F_Y$ to $\boldsymbol{Y}$, it suffices to find a lifting of the relative Frobenius $F_{Y/X}:Y\rightarrow Y\times_XX$ to a morphism $\boldsymbol{Y}\rightarrow \boldsymbol{Y}\times_{\boldsymbol{X}}\boldsymbol{X}$. By the first part of the above proposition, the obstruction space for such lifting is 0.

\end{proof}

The following useful lemma is self-evident and we omit its proof.

\begin{lemma}\label{mp} Let $M$ be a flat $\mathds{Z}/p^2\mathds{Z}$-module, then the `multiplication by $p$' map on $M$ defines an isomorphism
$$M/pM\xrightarrow{\boldsymbol{p}}pM.$$
\end{lemma}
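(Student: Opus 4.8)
The plan is to reduce the statement to a single divisibility fact. Write $R=\mathds{Z}/p^2\mathds{Z}$ and let $\mu_p\colon M\to M$ denote multiplication by $p$. By construction $\operatorname{im}\mu_p=pM$, so the first isomorphism theorem gives $M/\ker\mu_p\cong pM$, and it suffices to identify $\ker\mu_p$ with $pM$. One inclusion is formal: if $m=pm'\in pM$ then $pm=p^2m'=0$ in the $R$-module $M$, so $pM\subseteq\ker\mu_p$. Hence the entire content of the lemma is the reverse inclusion $\ker\mu_p\subseteq pM$, i.e.\ that every $p$-torsion element of $M$ is already divisible by $p$. This is exactly the point at which flatness must enter, since without it the claim is false (take $M=R/pR$).

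To prove $\ker\mu_p\subseteq pM$ I would invoke flatness through the equational criterion. Given $m\in M$ with $pm=0$, flatness of $M$ over $R$ applied to this one-term relation produces elements $y_1,\dots,y_r\in M$ and scalars $b_1,\dots,b_r\in R$ with $m=\sum_{j}b_jy_j$ and $pb_j=0$ for every $j$. In $R=\mathds{Z}/p^2\mathds{Z}$ the condition $pb_j=0$ forces $b_j\in pR$, so writing $b_j=pc_j$ yields $m=p\sum_j c_jy_j\in pM$, which is the desired inclusion. Combining the two inclusions gives $\ker\mu_p=pM$, and therefore $\mu_p$ induces the asserted isomorphism $M/pM\xrightarrow{\boldsymbol{p}}pM$.

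A more conceptual route, which I would mention as an alternative, is to observe the $R$-module isomorphism $pR\cong R/pR$ (both are copies of $\mathds{Z}/p\mathds{Z}$, the former generated by $p$ with annihilator $pR$) and to tensor the short exact sequence $0\to pR\to R\to R/pR\to 0$ with $M$. Flatness makes $\operatorname{Tor}_1^R(M,R/pR)$ vanish, so the map $pR\otimes_RM\to M$ remains injective; unwinding the identifications $pR\otimes_RM\cong M/pM$ and tracing the generator $p\otimes m$ to $pm$ exhibits this injection as $\bar m\mapsto pm$ with image $pM$, giving the isomorphism directly. The only genuine subtlety, and the reason one cannot dispatch the lemma by quoting "finite flat equals free over an artinian local ring," is that $M$ is not assumed finitely generated; the equational criterion (equivalently, the vanishing of $\operatorname{Tor}_1$) is precisely what makes the argument valid for arbitrary flat $M$.
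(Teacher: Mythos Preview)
Your proof is correct. Both the equational-criterion argument and the $\operatorname{Tor}_1$ argument are valid, and your remark that finite generation is not assumed (so one cannot simply invoke ``flat $=$ free over an artinian local ring'') is well taken. The paper itself declares the lemma self-evident and omits the proof entirely, so there is nothing to compare against; your write-up in fact supplies more detail than the paper deems necessary.
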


\smallskip The following corollary can be deduced from proposition \ref{dfm} directly and we omit its proof.
\begin{corollary}\label{rcor} Let $A$ be commutative $k$-algebra, $\boldsymbol{A}$ be a flat lifting of $A$ over $W_2(k)$. Then for any two liftings $F_{\boldsymbol{A}}$, $F_{\boldsymbol{A}}'$ of $F_A$, we have $F'_{\boldsymbol{A}}(a)=F_{\boldsymbol{A}}(a)+\boldsymbol{p}\eta(a)$, where $\eta:A\rightarrow A$ is a unique function satisfying
\begin{enumerate}
\item $\eta(a_1+a_2)=\eta(a_1)+\eta(a_2)$
\item $\eta(a_1a_2)=a_1^p\eta(a_2)+a_2^p\eta(a_1)$
\end{enumerate}
\end{corollary}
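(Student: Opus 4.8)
The plan is to work entirely inside $\boldsymbol{A}$ as a flat $W_2(k)$-algebra and extract $\eta$ from the difference of the two Frobenius lifts, using Lemma \ref{mp} to identify $p\boldsymbol{A}$ with $A$; the deformation-theoretic content of Proposition \ref{dfm} then reappears as the product rule (2). First I would introduce the difference operator $D:=F'_{\boldsymbol{A}}-F_{\boldsymbol{A}}:\boldsymbol{A}\to\boldsymbol{A}$. Since $F_{\boldsymbol{A}}$ and $F'_{\boldsymbol{A}}$ are ring homomorphisms that both reduce modulo $p$ to the same map $F_A$, for every $\boldsymbol{a}\in\boldsymbol{A}$ the element $D(\boldsymbol{a})$ lies in $p\boldsymbol{A}$, which by Lemma \ref{mp} is annihilated by $p$ and is carried isomorphically onto $A=\boldsymbol{A}/p\boldsymbol{A}$ by $\boldsymbol{p}$.

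The key step is to show that $D$ factors through the reduction $\boldsymbol{A}\to A$. For this I use that any ring homomorphism fixes $p\cdot 1_{\boldsymbol{A}}$: writing a general element of $p\boldsymbol{A}$ as $p\boldsymbol{b}$, one has $F_{\boldsymbol{A}}(p\boldsymbol{b})=pF_{\boldsymbol{A}}(\boldsymbol{b})$ and likewise for $F'_{\boldsymbol{A}}$, so $D(p\boldsymbol{b})=pD(\boldsymbol{b})\in p^2\boldsymbol{A}=0$. Hence $D$ descends to an additive map $\bar D:A\to p\boldsymbol{A}$, and I define $\eta:=\boldsymbol{p}^{-1}\circ\bar D:A\to A$. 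By construction $F'_{\boldsymbol{A}}(a)=F_{\boldsymbol{A}}(a)+\boldsymbol{p}\,\eta(a)$, and $\eta$ is the unique such function because $\boldsymbol{p}$ is injective. Property (1) is then immediate, since $\eta$ is a difference of two additive maps.

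Property (2) is the only step with genuine content, and it is where I expect the one subtlety to sit. Expanding $F'_{\boldsymbol{A}}(\boldsymbol{a}\boldsymbol{b})=F'_{\boldsymbol{A}}(\boldsymbol{a})F'_{\boldsymbol{A}}(\boldsymbol{b})$ via $F'_{\boldsymbol{A}}=F_{\boldsymbol{A}}+D$ and using multiplicativity of $F_{\boldsymbol{A}}$, the diagonal term $F_{\boldsymbol{A}}(\boldsymbol{a})F_{\boldsymbol{A}}(\boldsymbol{b})$ cancels and the term $D(\boldsymbol{a})D(\boldsymbol{b})$ vanishes because it lies in $(p\boldsymbol{A})^2\subseteq p^2\boldsymbol{A}=0$, leaving $D(\boldsymbol{a}\boldsymbol{b})=F_{\boldsymbol{A}}(\boldsymbol{a})D(\boldsymbol{b})+F_{\boldsymbol{A}}(\boldsymbol{b})D(\boldsymbol{a})$. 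The point to get right is that $p\boldsymbol{A}$ is naturally a module over $A$ (not merely over $\boldsymbol{A}$), so multiplication by $F_{\boldsymbol{A}}(\boldsymbol{a})$ on $D(\boldsymbol{b})\in p\boldsymbol{A}$ depends only on $F_{\boldsymbol{A}}(\boldsymbol{a})\bmod p=a^p$; since $\boldsymbol{p}$ is $A$-linear, applying $\boldsymbol{p}^{-1}$ yields exactly $\eta(ab)=a^p\eta(b)+b^p\eta(a)$.

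Finally I would note that this is precisely the affine shadow of Proposition \ref{dfm}: the lifts $F_{\boldsymbol{A}}$ and $F'_{\boldsymbol{A}}$ are two extensions of $g=i\circ F_X$ along the square-zero immersion $i:X\hookrightarrow\boldsymbol{X}$, so their difference is a global section of $J\otimes_{\mathcal{O}_X}g^{*}T_{\boldsymbol{X}/W_2(k)}\cong F_X^{*}T_{X/k}$, and over $\operatorname{Spec}A$ a section of $F_X^{*}T_{X/k}$ is exactly an $F_X$-twisted derivation, i.e.\ a map satisfying (1) and (2). The main obstacle is therefore not a genuine difficulty but the bookkeeping of keeping the two module structures—over $\boldsymbol{A}$ and over $A$—carefully separated, which is why the corollary can indeed be deduced directly.
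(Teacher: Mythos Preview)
Your argument is correct and matches what the paper intends: the paper omits the proof entirely, saying only that the corollary ``can be deduced from Proposition~\ref{dfm} directly,'' and your final paragraph makes exactly this identification. The explicit computation you give (defining $D=F'_{\boldsymbol{A}}-F_{\boldsymbol{A}}$, factoring through $A$ via Lemma~\ref{mp}, and reading off the $F_X$-twisted Leibniz rule) is precisely the unwinding of that deduction in the affine case, so there is no genuine difference in approach.
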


\smallskip The following result will be useful later.
\begin{corollary}\label{deg} Let $A$ be a commutative $k$-algebra, and $X=\mathbb{P}^1_A$. Then to give a lifting of $F_X$ over $W_2(k)$ is equivalent to give a lifting of $F_A$ over $W_2(k)$ together with a polynomial with coefficients in $A$ of degree $\leq 2p$.

\end{corollary}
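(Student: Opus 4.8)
The plan is to compute directly on the standard two-chart cover of $\mathbb{P}^1$, turning a lift of $F_X$ into gluing data. Write $X = U_0 \cup U_1$ with $U_0 = \operatorname{Spec} A[t]$ and $U_1 = \operatorname{Spec} A[s]$, glued over $\operatorname{Spec} A[t, t^{-1}]$ by $s = t^{-1}$. First I would reduce to a fixed lifting of the total space: since $\mathbb{P}^1_A \to \operatorname{Spec} A$ has relative tangent sheaf $\mathcal{O}(2)$ with $H^1(\mathbb{P}^1, \mathcal{O}(2)) = 0$ over the affine base, $\mathbb{P}^1$ is rigid and unobstructed, so every $W_2(k)$-lifting of $X$ has the form $\boldsymbol{X} = \mathbb{P}^1_{\boldsymbol{A}}$ for a flat lifting $\boldsymbol{A}$ of $A$, recovered as $\boldsymbol{A} = \Gamma(\boldsymbol{X}, \mathcal{O}_{\boldsymbol{X}})$, and covered by $\boldsymbol{U}_0 = \operatorname{Spec} \boldsymbol{A}[t]$ and $\boldsymbol{U}_1 = \operatorname{Spec} \boldsymbol{A}[s]$. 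Restricting any $F_{\boldsymbol{X}}$ to global functions then produces a lift $F_{\boldsymbol{A}}$ of $F_A$, which will be the base datum in the claimed equivalence.

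Next I would parametrize the chart-wise lifts over a fixed $(\boldsymbol{A}, F_{\boldsymbol{A}})$. A $W_2(k)$-algebra homomorphism $\boldsymbol{A}[t] \to \boldsymbol{A}[t]$ extending $F_{\boldsymbol{A}}$ is determined freely by the image of $t$, and lifting $F_{A[t]}$ forces this image to reduce to $t^p$; by Lemma \ref{mp} every such lift has the form $F_{\boldsymbol{U}_0}(t) = t^p + \boldsymbol{p}\,g_0(t)$ for a uniquely determined, otherwise arbitrary $g_0 \in A[t]$. Symmetrically, on $\boldsymbol{U}_1$ the lift is $F_{\boldsymbol{U}_1}(s) = s^p + \boldsymbol{p}\,g_1(s)$ with $g_1 \in A[s]$ arbitrary. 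Giving $F_{\boldsymbol{X}}$ is then the same as giving a pair $(g_0, g_1)$ whose restrictions to the overlap agree.

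The heart of the argument is the gluing computation. Extending $F_{\boldsymbol{U}_0}$ to $\boldsymbol{A}[t, t^{-1}]$ and using $\boldsymbol{p}^2 = 0$ to invert, I would compute
$$F_{\boldsymbol{U}_0}(s) = F_{\boldsymbol{U}_0}(t)^{-1} = t^{-p}\bigl(1 + \boldsymbol{p}\,t^{-p} g_0\bigr)^{-1} = t^{-p} - \boldsymbol{p}\,t^{-2p} g_0 = s^p - \boldsymbol{p}\,s^{2p}\,g_0(s^{-1}).$$
Comparison with $F_{\boldsymbol{U}_1}(s) = s^p + \boldsymbol{p}\,g_1(s)$ shows the two lifts glue if and only if $g_1(s) = -s^{2p} g_0(s^{-1})$; writing $g_0 = \sum_i a_i t^i$, this is $g_1 = -\sum_i a_i s^{2p-i}$, which lies in $A[s]$ exactly when $a_i = 0$ for $i > 2p$, i.e. when $\deg g_0 \le 2p$. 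Hence the compatible lifts are precisely the polynomials $g_0 \in A[t]$ of degree $\le 2p$ (with $g_1$ then determined and automatically of degree $\le 2p$), and a lifting of $F_X$ is the same as a lifting $(\boldsymbol{A}, F_{\boldsymbol{A}})$ of $F_A$ together with such a $g_0$. I expect the only real obstacle to be keeping the $\boldsymbol{p}$-torsion bookkeeping straight through the inversion—this is exactly where $\boldsymbol{p}^2 = 0$ and Lemma \ref{mp} are used—and verifying that the negative-power analysis pins the bound at $2p$ rather than at some smaller value.
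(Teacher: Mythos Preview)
Your proposal is correct and follows essentially the same approach as the paper's proof: both reduce to the standard lifting $\boldsymbol{X}=\mathbb{P}^1_{\boldsymbol{A}}$ via deformation theory, recover $F_{\boldsymbol{A}}$ from global sections, write the lift on one affine chart as $x^p+\boldsymbol{p}f$, and check that compatibility on the overlap $xy=1$ forces $\deg f\le 2p$. The paper leaves this last step as ``one checks easily'', whereas you have written out the inversion $F_{\boldsymbol{U}_0}(s)=s^p-\boldsymbol{p}\,s^{2p}g_0(s^{-1})$ explicitly, which also makes the converse direction (that any such polynomial yields a global lift) transparent.
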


\begin{proof}By the second part of Proposition \ref{dfm}, a flat lifting of $X$ over $W_2(k)$ is unique up to isomorphism. In particular, let $\boldsymbol{A}$ be a flat lifting of $A$ over $W_2(k)$, it suffices to prove the corollary for all lifting of $F_X$ to $\boldsymbol{X}=\mathbb{P}^1_{\boldsymbol{A}}$. For any lifting of $F_X$, there is an induced endomorphism of $\Gamma(\boldsymbol{X},\mathcal{O}_{\boldsymbol{X}})\cong\boldsymbol{A}$ hence a lifting of $F_A$. On the other hand, let $U=\mathrm{Spec}\,\boldsymbol{A}[x]$, $V=\mathrm{Spec}\,\boldsymbol{A}[y]$ be two open subschemes of $\boldsymbol{X}$ glued by the relation $xy=1$. Then a lifting $F_{\boldsymbol{X}}$ induces a lifting
on $U$, on which the image of $x$ under $F_{\boldsymbol{X}}$
can be written as $x^p+\boldsymbol{p}f$ with $f\in A[x]$. In order that this morphism can be extended to $V$, one checks easily $\deg f\leq2p$.

\end{proof}
In \cite[Appendix Proposition 1]{mehta1987varieties}, the authors give another version of obstruction for liftings of Frobenius. Now we recall their results.

\begin{pro}\label{MS} Let $X$ be a smooth variety, $(\boldsymbol{X}_n,F_{\boldsymbol{X}_n})/W_{n+1}(k)$ a lifting of $X$ to $W_{n+1}(k)$
together with a lifting of Frobenius. Then the obstruction to the existence of a pair $(\boldsymbol{X}_{n+1},F_{\boldsymbol{X}_{n+1}})$ over $W_{n+2}(k)$ consisting of a lifting $\boldsymbol{X}_{n+1}$ of $\boldsymbol{X}_n$ and a lifting of Frobenius $F_{\boldsymbol{X}_{n+1}}$ such that $F_{\boldsymbol{X}_{n+1}}|_{\boldsymbol{X}_n}\cong F_{\boldsymbol{X}_n}$ is given by a class in $H^1(X,T_X\otimes B_X^1)$, where $B^1_X:=F_{X*}\mathcal{O}_X/\mathcal{O}_X$. The various liftings form a principal homogeneous space under $H^0(X,T_X\otimes B_X^1)$.
\end{pro}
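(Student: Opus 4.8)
The plan is to prove this by a \v{C}ech-theoretic gluing argument, organizing the two sources of obstruction---deforming the scheme and lifting the Frobenius---through the short exact sequence defining $B_X^1$. Fix an affine open cover $\{U_i\}$ of $X$ and write $\boldsymbol{U}_i^{(n)}$ for the induced opens of $\boldsymbol{X}_n$. Since the kernel $I$ of $W_{n+2}(k)\to W_{n+1}(k)$ is $p^{n+1}W_{n+2}(k)\cong k$, I use Lemma \ref{mp} (and its evident $W_{n+2}$-analogue) to identify the infinitesimal direction $p^{n+1}$ with reduction modulo $p$, so that all obstruction and difference sheaves become coherent sheaves on the closed fibre $X$. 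On each affine piece a flat lift $\boldsymbol{U}_i^{(n+1)}$ over $W_{n+2}(k)$ together with a Frobenius lift $\tilde F_i$ extending $F_{\boldsymbol X_n}$ exists by smoothness; the whole problem is whether these local pairs can be glued. By the first part of Proposition \ref{dfm} the isomorphisms $g_{ij}$ between the local scheme lifts on the overlaps exist and are well defined up to an element of $T_X(U_{ij})$, while by Corollary \ref{rcor} the discrepancy between $\tilde F_i$ and the $g_{ij}$-transport of $\tilde F_j$ is measured by a twisted derivation $\eta_{ij}\in (T_X\otimes F_{X*}\mathcal O_X)(U_{ij})$, using the identification of $F_X$-twisted derivations valued in $F_{X*}\mathcal O_X$ with sections of $T_X\otimes F_{X*}\mathcal O_X$, valid since $X$ is smooth.

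The conceptual content is the short exact sequence
\[
0\to T_X\xrightarrow{\ \iota\ } T_X\otimes F_{X*}\mathcal O_X\to T_X\otimes B_X^1\to 0
\]
obtained by tensoring $0\to\mathcal O_X\to F_{X*}\mathcal O_X\to B_X^1\to 0$ with the locally free sheaf $T_X$. I claim that although the cochain $\{\eta_{ij}\}$ depends on the auxiliary choices $g_{ij}$ (and $\tilde F_i$, $\boldsymbol U_i^{(n+1)}$), its image $\{\bar\eta_{ij}\}$ in $T_X\otimes B_X^1$ is a \v{C}ech $1$-cocycle whose class $[\bar\eta]\in H^1(X,T_X\otimes B_X^1)$ is independent of all choices. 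Granting this, the connecting homomorphism sends $[\bar\eta]$ to the scheme-deformation obstruction in $H^2(X,T_X)$ supplied by the second part of Proposition \ref{dfm}; hence $[\bar\eta]=0$ forces $\boldsymbol X_{n+1}$ to exist, and a short unwinding shows that one can then adjust the $g_{ij}$ and $\tilde F_i$ to make both the scheme and the Frobenius glue, producing the desired pair. Conversely a global pair $(\boldsymbol X_{n+1},F_{\boldsymbol X_{n+1}})$ trivializes $[\bar\eta]$. Thus $[\bar\eta]$ is precisely the asserted obstruction.

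The main obstacle is the well-definedness claimed above, and it is here that the quotient $B_X^1$ rather than $F_{X*}\mathcal O_X$ enters. The computation to carry out is the effect on $\eta_{ij}$ of replacing $g_{ij}$ by $(1+p^{n+1}\delta)g_{ij}$ with $\delta\in T_X(U_{ij})$: conjugating the relevant Frobenius lift by $1+p^{n+1}\delta$ and reducing modulo $p$, one uses $p^{n+2}=0$ together with the fact that a Frobenius lift reduces to the $p$-power map (so that $p^{n+1}\tilde F(\delta a)\equiv p^{n+1}(\delta a)^p$) to find that $\eta_{ij}$ changes exactly by the twisted derivation $a\mapsto(\delta a)^p$. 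This is $\iota(\delta)$, the image of $\delta$ under the inclusion in the displayed sequence; the map $a\mapsto a^p$ intervening here is the very map $\mathcal O_X\to F_{X*}\mathcal O_X$ whose cokernel defines $B_X^1$. Therefore the ambiguity in $\{\eta_{ij}\}$ lies in $\iota(T_X)$ and dies in the quotient, and the same local calculation shows that the triple-overlap defect of $\{\eta_{ij}\}$ equals $\iota$ of the scheme obstruction cocycle, so that $\{\bar\eta_{ij}\}$ is genuinely a cocycle and computes the connecting map as claimed.

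Finally, the ``principal homogeneous space'' statement is the same bookkeeping shifted down one degree. Assuming a lift exists, two pairs differ by a scheme-lift difference in $H^1(X,T_X)$ and, over a fixed scheme lift, by a Frobenius difference in $H^0(X,T_X\otimes F_{X*}\mathcal O_X)$ (Corollary \ref{rcor}), while isomorphisms of scheme lifts absorb the subgroup $H^0(X,T_X)$. Feeding these through the long exact sequence of the displayed short exact sequence collapses the difference of two liftings to a well-defined element of $H^0(X,T_X\otimes B_X^1)$, and conversely every such element is realized; this exhibits the set of liftings as a torsor under $H^0(X,T_X\otimes B_X^1)$. The only genuinely computational input throughout is the conjugation identity of the third paragraph.
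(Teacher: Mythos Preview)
The paper does not prove this proposition: it is quoted verbatim from \cite[Appendix, Proposition 1]{mehta1987varieties} (the appendix due to Nori), so there is no in-paper argument to compare against. Your \v{C}ech-theoretic sketch is essentially the standard proof one finds there. The key local computation you isolate---that modifying the gluing isomorphism $g_{ij}$ by $1+p^{n+1}\delta$ changes the Frobenius discrepancy $\eta_{ij}$ by the twisted derivation $a\mapsto(\delta a)^p$, because the post-composition contribution $\delta(a^p)$ vanishes in characteristic $p$---is exactly the mechanism by which the ambiguity lands in $\iota(T_X)$ and hence dies in $T_X\otimes B_X^1$. Your identification of twisted derivations (Corollary \ref{rcor}) with sections of $T_X\otimes F_{X*}\mathcal{O}_X$ via the projection formula, and the reading of the connecting map $H^1(T_X\otimes B_X^1)\to H^2(T_X)$ as the scheme-deformation obstruction, are both correct.

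Two small points worth tightening if you write this out in full. First, you should record explicitly that modifying the local Frobenius lifts $\tilde F_i$ by $\xi_i\in(T_X\otimes F_{X*}\mathcal{O}_X)(U_i)$ changes $\eta_{ij}$ by the \v{C}ech coboundary $\xi_i-\xi_j$, so that $[\bar\eta]$ is independent of this choice as well; you allude to this but do not state it. Second, the step ``a short unwinding shows that one can then adjust the $g_{ij}$ and $\tilde F_i$ to make both the scheme and the Frobenius glue'' deserves one more sentence: once $[\bar\eta]=0$ you first modify the $\tilde F_i$ so that $\eta_{ij}$ lies in $\iota(T_X)$, and then these $\eta_{ij}$ are themselves a $T_X$-valued $1$-cocycle (its coboundary being the now-vanishing scheme obstruction) whose class in $H^1(T_X)$ tells you how to re-choose the scheme lift so that the transported Frobenius lifts agree on the nose. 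With these clarifications your argument is complete.
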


Note that the obstruction space above takes into consideration of pairs $(\boldsymbol{X}_{n+1},F_{\boldsymbol{X}_{n+1}})$ up to isomorphism, which is different from Proposition \ref{dfm}.

\medskip The liftability of Frobenius is a fairly strong property for an algebraic variety, one can show \cite[Proposition 8.6]{illusie2002frobenius} a proper smooth variety with liftable Frobneius is ordinary in the following sense.

\begin{definition}\label{ordi} Let $X$ be a proper, smooth variety over $k$, $X$ is said to be ordinary if $H^j(X,B^i_X)=0$ for all $(i,j)$, where $B^i_X=d\Omega^{i-1}_{X/k}$ is the $i$-th coboundary of the de Rham complex $\Omega^{\bullet}_{X/k}$.

\end{definition}

\bigskip Next we prove the main result of this section, which serves as an obstruction in disproving the existence of $W_2$-lifting of Frobenius for most surfaces.

\smallskip Let $X$ be a smooth variety over $k$, $\boldsymbol{X}$ be a flat lifting of $X$ over $W_2(k)$ and $F_{\boldsymbol{X}}$ be a lifting of the Frobenius of $X$ to $\boldsymbol{X}$. Then we will have an induced morphism
$$dF_{\boldsymbol{X}}:F_{\boldsymbol{X}}^*\Omega^1_{\boldsymbol{X}/W_2(k)}\rightarrow\Omega^1_{\boldsymbol{X}/W_2(k)}.$$

\noindent Since the reduction of the above morphism modulo $p$ is 0 and the sheaf $\Omega^1_{\boldsymbol{X}/W_2(k)}$ is $W_2(k)$-flat, $dF_{\boldsymbol{X}}$ factors through $p\Omega^1_{\boldsymbol{X}/W_2(k)}$. By lemma \ref{mp} we have an isomorphism of $\mathcal{O}_X$-modules $p\Omega^1_{\boldsymbol{X}/W_2(k)}\cong\Omega^1_{X/k}$. Thus $dF_{\boldsymbol{X}}$ induces an $\mathcal{O}_X$-linear morphism
\begin{equation}\varphi_{\scriptscriptstyle{F_{\boldsymbol{X}}}}:F_{X}^*\Omega^1_{X/k}\rightarrow\Omega^1_{X/k}.\label{dF/p}\end{equation}

\begin{pro}\label{gb} Let $X$ be a smooth variety over $k$ admitting a $W_2$-lifting of its Frobenius, then the morphism $\varphi_{\scriptscriptstyle{F_{\boldsymbol{X}}}}$ is generically bijective.
\end{pro}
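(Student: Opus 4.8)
The plan is to reduce the assertion to the non-vanishing of a single determinant and then to extract that non-vanishing from the multiplicativity of the Cartier operator. Since $\varphi_{\scriptscriptstyle{F_{\boldsymbol{X}}}}$ is a map between locally free sheaves of the same rank $r=\dim X$, generic bijectivity is equivalent to the non-vanishing of its determinant at the generic point, and this can be tested on any dense open. As $k$ is perfect, I would choose a dense open $U=\operatorname{Spec}A$ carrying \'{e}tale coordinates $x_1,\dots,x_r$, so that $\Omega^1_{X/k}|_U$ is free on $dx_1,\dots,dx_r$ and $F_X^*\Omega^1_{X/k}|_U$ is free on the $F_X^*dx_i$. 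It then suffices to show that the matrix $M=(M_{ji})$ determined by $\varphi_{\scriptscriptstyle{F_{\boldsymbol{X}}}}(F_X^*dx_i)=\sum_j M_{ji}\,dx_j$ satisfies $\det M\neq 0$ in $A$.

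Next I would compute $\varphi_{\scriptscriptstyle{F_{\boldsymbol{X}}}}$ explicitly over $U$. Lifting the coordinates to elements $\tilde x_i$ of a flat lift $\boldsymbol{A}$ of $A$, the lifted Frobenius satisfies $F_{\boldsymbol{X}}^\sharp(\tilde x_i)=\tilde x_i^{\,p}+\boldsymbol{p}\,u_i$ for suitable $u_i$, since its reduction modulo $p$ is $F_X$. Differentiating gives $dF_{\boldsymbol{X}}(d\tilde x_i)=\boldsymbol{p}\,(\tilde x_i^{\,p-1}d\tilde x_i+du_i)$, so that under the identification $\boldsymbol{p}\,\Omega^1_{\boldsymbol{X}/W_2(k)}\cong\Omega^1_{X/k}$ of Lemma \ref{mp} the map \eqref{dF/p} reads $\omega_i:=\varphi_{\scriptscriptstyle{F_{\boldsymbol{X}}}}(F_X^*dx_i)=x_i^{p-1}dx_i+d\bar u_i$, where $\bar u_i=u_i\bmod p$. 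Each $\omega_i$ is a closed $1$-form, and the Cartier operator $C$ sends it to $C(\omega_i)=dx_i$, because $C(x_i^{p-1}dx_i)=dx_i$ while $C$ annihilates the exact form $d\bar u_i$.

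Finally I would invoke that $C$ is multiplicative on closed forms, i.e. $C(\alpha\wedge\beta)=C(\alpha)\wedge C(\beta)$. The top form $\omega_1\wedge\cdots\wedge\omega_r$ is automatically closed and equals $(\det M)\,dx_1\wedge\cdots\wedge dx_r$; applying $C$ yields $C\big((\det M)\,dx_1\wedge\cdots\wedge dx_r\big)=C(\omega_1)\wedge\cdots\wedge C(\omega_r)=dx_1\wedge\cdots\wedge dx_r\neq 0$. Hence $(\det M)\,dx_1\wedge\cdots\wedge dx_r\neq 0$, which forces $\det M\neq 0$; this is exactly the generic bijectivity of $\varphi_{\scriptscriptstyle{F_{\boldsymbol{X}}}}$.

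The crux of the argument is recognizing that the images $\omega_i$ are closed with $C(\omega_i)=dx_i$ and then routing the determinant through the Cartier operator. A naive attempt to prove $\det M\neq 0$ by tracking the single monomial $x_1^{p-1}\cdots x_r^{p-1}$ breaks down for $r\geq 2$, since cross terms arising from the $d\bar u_i$ can a priori contribute to that monomial; the multiplicativity of $C$ disposes of all such potential cancellations at once. The one point to verify carefully is that reducing $\tilde x_i^{\,p-1}$ modulo $p$ together with Lemma \ref{mp} genuinely identifies $\tfrac{1}{\boldsymbol{p}}\,dF_{\boldsymbol{X}}$ with the map \eqref{dF/p}, so that the local formula $\omega_i=x_i^{p-1}dx_i+d\bar u_i$ is legitimate.
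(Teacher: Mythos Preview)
Your argument via the Cartier operator is correct and is essentially the Deligne--Illusie observation that a $W_2$-lift of Frobenius splits the Cartier isomorphism: the forms $\omega_i=x_i^{p-1}dx_i+d\bar u_i$ are closed with $C(\omega_i)=dx_i$, and multiplicativity of $C$ on closed forms forces $\omega_1\wedge\cdots\wedge\omega_r\neq 0$, hence $\det M\neq 0$.

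The paper, however, takes precisely the route you label ``naive'' and makes it work. It passes to the formal completion $\hat A\cong k[[t_1,\dots,t_n]]$ (via a short lemma extending derivations), writes the matrix of $\varphi_{F_{\boldsymbol{X}}}$ as $\mathrm{Diag}(t_1^{p-1},\dots,t_n^{p-1})+(\partial f_i/\partial t_j)$, and shows that the coefficient of $t_1^{p-1}\cdots t_n^{p-1}$ in the determinant is exactly~$1$. The cross terms you worried about are handled by a direct combinatorial argument: by multilinearity one reduces to monomials $f_i=t_1^{k_{i1}}\cdots t_n^{k_{in}}$ with $0\le k_{ij}\le p-1$, for which $\det(\partial f_i/\partial t_j)=\det(k_{ij})\prod_j t_j^{-1+\sum_i k_{ij}}$; matching the exponent $p-1$ in each variable forces every column of $(k_{ij})$ to sum to $p\equiv 0$ in $k$, so $\det(k_{ij})=0$. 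Your Cartier argument is more conceptual and avoids the completion and the monomial bookkeeping; the paper's computation is more elementary in that it uses no Cartier theory, and it in fact recovers the same underlying identity (the coefficient statement is a disguised form of $C((\det M)\,dt_1\wedge\cdots\wedge dt_n)=dt_1\wedge\cdots\wedge dt_n$). So your dismissal of the monomial-tracking approach is a bit hasty: it does not break down, it just requires the extra vanishing step the paper supplies.
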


\begin{proof} It is easy to see the claim in the proposition can be reduced to the affine case and it suffices to prove the determinant of $\varphi_{\scriptscriptstyle{F_{\boldsymbol{X}}}}$ is nonzero. To be more precise, let $X=\mathrm{Spec}\,A$, $X\rightarrow \mathbb{A}^n$ be an \'{e}tale cover and $\{dt_i\}_{1\leq i\leq n}$ be a basis of $\Omega^1_{X/k}$ with $t_i\in A$. Then to give a flat lifting of $X$ over $W_2(k)$ is equivalent to give a flat $W_2(k)$-algebra $\boldsymbol{A}$ such that $\boldsymbol{A}/p\boldsymbol{A}\cong A$. Now we choose a set of liftings $\boldsymbol{t}_i\in \boldsymbol{A}$ of $t_i$, $1\leq i\leq n$, then for any lifting $F_{\boldsymbol{X}}$ of $F_X$ to $\boldsymbol{X}=\mathrm{Spec}\,\boldsymbol{A}$ we can find $\boldsymbol{f}_i\in \boldsymbol{A}$ such that $$F_{\boldsymbol{X}}(\boldsymbol{t}_i)=\boldsymbol{t}_i^p+p\boldsymbol{f}_i.$$
\noindent Then the morphism $\varphi_{\scriptscriptstyle{F_{\boldsymbol{X}}}}$ with respect to the basis $\{dt_1,\cdots,dt_n\}$ is given by the matrix
\begin{equation}\label{matrix}\mathrm{Diag}(t^{p-1}_1,\cdots,t_n^{p-1})+\left(\frac{\partial f_i}{\partial t_j}\right),\end{equation}
\noindent where $f_i$ is the reduction of $\boldsymbol{f}_i$ modulo $p$. In order to prove the determinant of the above matrix is nonzero, we need th following lemma.
\begin{lemma}\label{ufe} Let $A$ be a regular local ring over $k$ and $\hat{A}$ be the completion of $A$ with respect to its maximal ideal. Then for any derivation $D\in\mathrm{Der}_k(A,A)$, we can find $\hat{D}\in\mathrm{Der}_k(\hat{A},\hat{A})$ such that $\rho(D(a))=\hat{D}(\rho(a))$, where $a\in A$ and $\rho:A\rightarrow\hat{A}$ is the natural inclusion.

\end{lemma}

\begin{proof} Let $d:A\rightarrow\Omega^1_{A/k}$ be the K\"{a}hler differential, by \cite[Definition 11.4, 12.2, Proposition 12.4]{kunz1986kahler} there exists a \emph{universally finite $\rho$-extension of $d$}. Moreover, the universally finite module of differentials is isomorphic to $\Omega_{A/k}\otimes_A\hat{A}$ and the differential $\hat{d}:\hat{A}\rightarrow\Omega_{A/k}\otimes_A\hat{A}$ is nothing but taking differentials
term-by-term then summing up the results. To be more explicit, we have the following commutative diagram
 $$\xymatrix{A\ar[d]^{d}\ar[r]^{\rho}&\hat{A}\ar[d]^{\hat{d}}\\
\Omega_{A/k}\ar[r]&\Omega_{A/k}\otimes_A\hat{A}}
$$
\noindent By the universal property for the pair $(\Omega_{A/k}\otimes_A\hat{A},\hat{d})$ \cite[Definition 11.4 (b)]{kunz1986kahler}, one can see easily the derivation $\hat{D}=D\otimes\mathrm{id}_{\hat{A}}$ satisfies the requirement in the lemma.

\end{proof}

\noindent Therefore, if we denote by $\hat{f}_i$ the image of $f_i$ in $\hat{A}$ and $\hat{\partial}_j\in \mathrm{Der}_k(\hat{A},\hat{A})$ be derivation associated to $\frac{\partial}{\partial t_j}$ as in lemma \ref{ufe}, then the image of the determinant of the matrix $(\ref{matrix})$ in $\hat{A}$ under the inclusion $\rho$ is nothing but
\begin{equation}\det\left(\mathrm{Diag}(t^{p-1}_1,\cdots,t_n^{p-1})+(\hat{\partial}_j(\hat{f}_i)\right).\label{det}\end{equation}

\smallskip To prove the proposition, it suffices to show the coefficient of the monomial $t^{p-1}_1\cdots t_n^{p-1}$ in the formal power series (\ref{det}) is 1. Note that the determinant (\ref{det}) is equal to

$$\sum_{1\leq N\leq n}\sum_{i_1<\cdots<i_N}t^{p-1}_{i_1}\cdots t^{p-1}_{i_N}\det M_{i_1,\cdots,i_N},$$

\noindent where $M_{i_1,\cdots,i_N}$ is the minor of $(\hat{\partial}_j(\hat{f}_i))$ obtained by deleting the $i_1$-th, $\cdots, i_N$-th rows and columns. Therefore, it suffices to prove for any $m$ power series $\hat{f}_i\in k[[t_1,\cdots,t_n]]$, $1\leq i\leq m$, $1\leq m\leq n$, the coefficient of $t^{p-1}_1\cdots t_m^{p-1}$ in $\det(\hat{\partial}_j(\hat{f}_i))$ is 0, where $1\leq j\leq m$.

\smallskip Since $\hat{f}_i\in k[[t_1,\cdots,t_n]]$, it can be written as (might not be uniquely)
\begin{equation}\label{sum}\sum_{0\leq s_1,\cdots,s_n\leq p-1}a_{i,s_1,\cdots,s_n}t_1^{s_1}\cdots t_n^{s_n}+\sum_{1\leq s\leq n}t_s^pg_{is},\ \ a_{i,s_1,\cdots, s_n}\in k,\ g_{is}\in k[[t_1,\cdots, t_n]].\end{equation}

\noindent Note that the terms $\hat{\partial}_j(t_s^pg_{is}), 1\leq i, j\leq m, 1\leq s\leq n$ have no contribution in computing the coefficient of $t_1^{p-1}\cdots t_m^{p-1}$ in $\det(\hat{\partial}_j(\hat{f}_i))$, we may just assume $g_{is}=0$, for all $1\leq i\leq m, 1\leq s\leq n$ in proving the claim above.

\smallskip By the multilinearity of determinant, we are reduced to prove the following claim.

\smallskip\noindent Let $f_i=t_1^{k_{i1}}\cdots t_n^{k_{in}}$, $1\leq i\leq n$ such that $0\leq k_{i1},\cdots,k_{in}\leq p-1$, then the coefficient of $t^{p-1}_1\cdots t_n^{p-1}$ in the determinant of $(\frac{\partial f_i}{\partial t_j})$ is 0.

\medskip\noindent By assumption, one checks easily
$$\det\left(\frac{\partial f_j}{\partial t_i}\right)=\det(k_{ij})\prod_{j=1}^nt^{s_j}_j,\hspace{10pt}s_j=-1+\sum_{i=1}^nk_{ij}.$$

\noindent Let $s_j=p-1$ for $1\leq j\leq n$, then $\sum_{i=1}^nk_{ij}=p$ consequently $\det(k_{ji})=0$. Thus the claim above hence the proposition is proved.

\end{proof}

\section{Proof of the Theorem}

\subsection*{Surfaces with $\kappa\geq1$}

\begin{pro} Let $X$ be a proper smooth variety over $k$ such that $\kappa(X)\geq1$, then $F_X$ cannot be lifted to $W_2(k)$.
\end{pro}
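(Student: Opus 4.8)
The plan is to turn the generic bijectivity of Proposition \ref{gb} into a statement about the canonical bundle by passing to determinants, and then to contradict the positivity of $\omega_{X/k}$ recorded by $\kappa(X)\geq 1$. I may assume $X$ is integral (arguing on a connected component otherwise), so that sections of line bundles embed into the function field and products of nonzero sections stay nonzero.

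Suppose for contradiction that $F_X$ lifts to $W_2(k)$ via a pair $(\boldsymbol{X},F_{\boldsymbol{X}})$, and let $\varphi_{\scriptscriptstyle{F_{\boldsymbol{X}}}}\colon F_X^*\Omega^1_{X/k}\to\Omega^1_{X/k}$ be the morphism of \eqref{dF/p}. By Proposition \ref{gb} it is generically bijective. Writing $n=\dim X$ and taking $n$-th exterior powers, I obtain $\det\varphi_{\scriptscriptstyle{F_{\boldsymbol{X}}}}\colon F_X^*\omega_{X/k}\to\omega_{X/k}$, which is nonzero because a generically bijective map of locally free sheaves of equal rank has nonvanishing determinant. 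Since $k$ is perfect the absolute Frobenius satisfies $F_X^*\mathcal{L}\cong\mathcal{L}^{\otimes p}$ for every line bundle $\mathcal{L}$, so $F_X^*\omega_{X/k}\cong\omega_{X/k}^{\otimes p}$ and $\det\varphi_{\scriptscriptstyle{F_{\boldsymbol{X}}}}$ amounts to a nonzero global section
$$s\in\mathrm{Hom}\bigl(\omega_{X/k}^{\otimes p},\omega_{X/k}\bigr)=H^0\bigl(X,\omega_{X/k}^{\otimes(1-p)}\bigr).$$

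Next I would play this off against the positivity coming from $\kappa(X)\geq 1\geq 0$. Choose $m_0>0$ with $h^0(X,\omega_{X/k}^{\otimes m_0})\neq 0$, which exists since $\kappa(X)\geq 0$, and set $N=m_0(p-1)\geq 1$. The $(p-1)$-st tensor power of a nonzero section of $\omega_{X/k}^{\otimes m_0}$ shows $H^0(X,\omega_{X/k}^{\otimes N})\neq 0$, while $s^{\otimes m_0}$ is a nonzero section of $\omega_{X/k}^{\otimes(-N)}$ (nonzero because $X$ is integral). Thus both $\omega_{X/k}^{\otimes N}$ and its dual carry nonzero sections. The elementary fact I would invoke is that on a proper integral variety a line bundle $L$ with $H^0(L)\neq 0\neq H^0(L^{\vee})$ is trivial: the product of a nonzero section of $L$ with one of $L^{\vee}$ is a nonzero element of $H^0(X,\mathcal{O}_X)=k$, hence nowhere vanishing, forcing each factor to be nowhere vanishing and $L\cong\mathcal{O}_X$. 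Applying this yields $\omega_{X/k}^{\otimes N}\cong\mathcal{O}_X$.

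Finally, $\omega_{X/k}^{\otimes N}\cong\mathcal{O}_X$ makes $\omega_{X/k}$ a torsion class in $\mathrm{Pic}(X)$, so the numbers $h^0(X,\omega_{X/k}^{\otimes m})$ range over finitely many values and stay bounded; equivalently the canonical ring has dimension at most $1$, forcing $\kappa(X)\leq 0$. This contradicts $\kappa(X)\geq 1$ and finishes the proof. The step I expect to demand the most care is the passage from the generically bijective sheaf map to a genuine nonzero section of a \emph{negative} power of $\omega_{X/k}$: pinning down the identification $F_X^*\omega_{X/k}\cong\omega_{X/k}^{\otimes p}$ correctly, and using integrality consistently so that every "nonzero section" assertion and each product of sections is legitimate.
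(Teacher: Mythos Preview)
Your proof is correct and follows essentially the same line as the paper: apply Proposition~\ref{gb}, pass to determinants to get a nonzero section of $\omega_{X/k}^{\otimes(1-p)}$, and contradict $\kappa(X)\geq 1$. The only cosmetic difference is that the paper iterates the map with its Frobenius pullbacks to produce sections of $\omega_{X/k}^{\otimes(1-p^n)}$ for all $n$, whereas you tensor a single such section against an effective pluricanonical section to force $\omega_{X/k}$ to be torsion; both routes finish the argument equally well.
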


\begin{proof}Otherwise, by Proposition \ref{gb} we will have a generically bijective morphism $\varphi_{\scriptscriptstyle{F_{\boldsymbol{X}}}}:F_X^*\Omega^1_{X/k}\rightarrow\Omega^1_{X/k}$. By taking determinant, we obtain an injective morphism $\wedge\varphi_{\scriptscriptstyle{F_{\boldsymbol{X}}}}:\omega^{\otimes p}_{X/k}\rightarrow\omega_{X/k}$. Iterating this morphism with its pullbacks via Frobenius, we get an injective morphism from $\omega^{\otimes p^n}_{X/k}$ to $\omega_{X/k}$ hence a nonzero global section of $\omega^{\otimes (1-p^n)}_{X/k}$ for all $n\geq1$. This is impossible when $\kappa(X)\geq1$, since a sufficiently high positive power of the canonical divisor is linearly equivalent to an effective divisor.

\end{proof}

\subsection*{Surfaces with $\kappa=0$}
\subsubsection*{K3 surface, Enriques surfaces and Quasi-hyperelliptic Surfaces}\hfill
\begin{pro}\label{toris} If $X$ has a torsion canonical line bundle and admits a lifting of $F_X$ over $W_2(k)$, then $\varphi_{\scriptscriptstyle{F_{\boldsymbol{X}}}}$ is an isomorphism. Moreover,
\begin{enumerate}
\item $\Omega^1_{X/k}$ is \'{e}tale trivializable,
\item $X$ contains no rational curves.
\end{enumerate}
\end{pro}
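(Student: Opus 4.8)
The plan is to first upgrade the generic bijectivity of $\varphi_{\scriptscriptstyle{F_{\boldsymbol{X}}}}$ supplied by Proposition \ref{gb} to an honest isomorphism, using the torsion hypothesis on $\omega_{X/k}$, and then to read off the two further assertions from this isomorphism.

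\textbf{Step 1 (isomorphism).} Taking the top exterior power of $\varphi_{\scriptscriptstyle{F_{\boldsymbol{X}}}}\colon F_X^*\Omega^1_{X/k}\to\Omega^1_{X/k}$ and using that the absolute Frobenius acts on line bundles by $F_X^*L\cong L^{\otimes p}$, I obtain a map of line bundles $\omega_{X/k}^{\otimes p}\to\omega_{X/k}$, equivalently a global section $s$ of $\omega_{X/k}^{\otimes(1-p)}$. Generic bijectivity means $\det\varphi_{\scriptscriptstyle{F_{\boldsymbol{X}}}}\neq0$, so $s\neq0$. Since $\omega_{X/k}$ is torsion, so is $\omega_{X/k}^{\otimes(1-p)}$, and a nonzero section of a torsion line bundle on a proper connected variety is nowhere vanishing: if $s$ vanished along an effective divisor $D$ then $\omega_{X/k}^{\otimes(1-p)}\cong\mathcal{O}_X(D)$, and raising to the torsion order $m$ would give $mD\sim 0$ with $mD$ effective, forcing $D=0$. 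Hence $\det\varphi_{\scriptscriptstyle{F_{\boldsymbol{X}}}}$ is nowhere zero, and $\varphi_{\scriptscriptstyle{F_{\boldsymbol{X}}}}$ is an isomorphism.

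\textbf{Step 2 (\'etale trivializability).} The isomorphism just obtained exhibits $\Omega^1_{X/k}$ as a Frobenius-periodic bundle, $F_X^*\Omega^1_{X/k}\cong\Omega^1_{X/k}$, i.e. periodic of period $1$. By the theorem of Lange--Stuhler, which identifies the finite-\'etale-trivializable bundles with exactly the Frobenius-periodic ones, there is a finite \'etale cover $\pi\colon Y\to X$ with $\pi^*\Omega^1_{X/k}\cong\mathcal{O}_Y^{\oplus n}$. This is assertion (1). For assertion (2), suppose $C\subset X$ is a rational curve and let $f\colon\mathbb{P}^1\to X$ be the normalization of $C$ followed by its inclusion, so $f$ is birational onto $C$ and in particular generically an immersion. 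Forming the finite \'etale cover $\mathbb{P}^1\times_X Y\to\mathbb{P}^1$ and using that $\mathbb{P}^1$ is simply connected, it splits into copies of $\mathbb{P}^1$; pulling back the trivialization $\pi^*\Omega^1_{X/k}\cong\mathcal{O}_Y^{\oplus n}$ along one component shows $f^*\Omega^1_{X/k}\cong\mathcal{O}_{\mathbb{P}^1}^{\oplus n}$. On the other hand the canonical map $f^*\Omega^1_{X/k}\to\Omega^1_{\mathbb{P}^1}\cong\mathcal{O}_{\mathbb{P}^1}(-2)$ is generically surjective, hence nonzero, whereas $\mathrm{Hom}(\mathcal{O}_{\mathbb{P}^1}^{\oplus n},\mathcal{O}_{\mathbb{P}^1}(-2))=H^0(\mathbb{P}^1,\mathcal{O}_{\mathbb{P}^1}(-2))^{\oplus n}=0$; this contradiction proves (2).

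I expect the main obstacle to be Step 1: the whole argument hinges on promoting the merely generic statement of Proposition \ref{gb} to a genuine isomorphism, and this is exactly where the torsion assumption on $\omega_{X/k}$ is indispensable. Without it, the determinant section $s$ could vanish along a divisor and $\varphi_{\scriptscriptstyle{F_{\boldsymbol{X}}}}$ would degenerate there, so neither trivializability nor the absence of rational curves would follow. The deepest external ingredient is the Lange--Stuhler equivalence used in Step 2; once \'etale trivializability is available, the exclusion of rational curves reduces to the short $\mathrm{Hom}$-vanishing computation on $\mathbb{P}^1$ carried out above.
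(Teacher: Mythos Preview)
Your proof is correct and tracks the paper's argument closely in Step~1 and in the appeal to Lange--Stuhler for~(1). The one genuine difference is in how you handle~(2).

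The paper proves~(2) \emph{directly} from the isomorphism $\varphi_{\scriptscriptstyle F_{\boldsymbol{X}}}$, without passing through~(1): pulling back along a separable $f\colon\mathbb{P}^1\to X$ gives $f^*T_{X/k}\cong F_{\mathbb{P}^1}^*f^*T_{X/k}$, and since any bundle on $\mathbb{P}^1$ splits as $\bigoplus\mathcal{O}(a_i)$ with some $a_i\geq 2$ (from the injection $T_{\mathbb{P}^1}\hookrightarrow f^*T_{X/k}$), Frobenius-periodicity is impossible. Your route instead leverages~(1): \'etale triviality plus simple connectedness of $\mathbb{P}^1$ forces $f^*\Omega^1_{X/k}\cong\mathcal{O}_{\mathbb{P}^1}^{\oplus n}$, which admits no nonzero map to $\mathcal{O}_{\mathbb{P}^1}(-2)$. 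Both arguments are short and valid; the paper's is marginally more self-contained in that~(2) does not logically depend on~(1), while yours makes~(1) do double duty and avoids re-analysing the splitting type under Frobenius pullback. Your justification that $f^*\Omega^1_{X/k}\to\Omega^1_{\mathbb{P}^1}$ is nonzero (normalization is birational, hence $f$ is a generic immersion) is the same separability point the paper invokes.
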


\begin{proof} Suppose we are given a lifting $F_{\boldsymbol{X}}$ of $F_X$, then by Proposition \ref{gb}, there is an induced generically bijective morphism $\varphi_{\scriptscriptstyle F_{\boldsymbol{X}}}$. By taking determinant, we have an injective morphism $\wedge\varphi_{\scriptscriptstyle F_{\boldsymbol{X}}}:\omega^{\otimes p}_{X/k}\rightarrow\omega^1_{X/k}$. Since $\omega^1_{X/k}$ is a torsion line bundle, $\wedge\varphi_{\scriptscriptstyle F_{\boldsymbol{X}}}$ must be an isomorphism. Thus $\varphi_{\scriptscriptstyle F_{\boldsymbol{X}}}$ is also an isomorphism and claim (1) follows readily from \cite[1.4 Satz]{lange1977vektorbundel}.

\smallskip If $X$ contains a rational curve, then we are given a nonconstant morphism $f:\mathbb{P}^1\rightarrow X$ by normalization. Moreover, we can assume $f$ is separable. Then $f$ induces a morphism
$$0\rightarrow T_{\mathbb{P}^1/k}\rightarrow f^*T_{X/k}.$$

\noindent Since any vector bundles on $\mathbb{P}^1$ decompose into direct sum of line bundles, we have
\begin{equation}\label{dmp}f^*T_{X/k}\cong \mathcal{O}_{\mathbb{P}^1}(2)\oplus\bigoplus_{i=1}^{n-1}\mathcal{O}_{\mathbb{P}^1}(n_i).\end{equation}
\noindent On the other hand, the bijective morphism $\varphi_{\scriptscriptstyle F_{\boldsymbol{X}}}$ induces the following isomorphism
$$f^*T_{X/k}\rightarrow f^*F^*_XT_{X/k}\cong F^*_{\mathbb{P}^1}f^*T_{X/k}.$$
\noindent However, this obviously contradicts the decomposition (\ref{dmp}).

\end{proof}

\begin{corollary} If $X$ is a K3 surface, an Enriques surface or a quasi-hyperelliptic surface, then the Frobenius morphism $F_X$ cannot be lifted to $W_2(k)$.
\end{corollary}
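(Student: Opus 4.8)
The plan is to reduce the statement to an application of Proposition \ref{toris}, so the entire argument hinges on verifying that each of the three surface types (K3, Enriques, quasi-hyperelliptic) either has a torsion canonical bundle or admits an \'{e}tale cover that does. First I would recall the relevant numerical facts from the classification of surfaces with $\kappa=0$: a K3 surface has trivial canonical bundle $\omega_{X/k}\cong\mathcal{O}_X$; an Enriques surface has $\omega_{X/k}$ of order $2$ in the Picard group (order $1$ or $2$ if $p=2$), hence torsion; and a quasi-hyperelliptic surface also has torsion canonical bundle, being a quotient construction sharing the numerical invariants of hyperelliptic surfaces. In every case the hypothesis ``$X$ has a torsion canonical line bundle'' of Proposition \ref{toris} is satisfied.

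Next, arguing by contradiction, I would suppose $F_X$ lifts over $W_2(k)$. By Proposition \ref{toris}, the induced morphism $\varphi_{\scriptscriptstyle F_{\boldsymbol{X}}}$ is then an isomorphism, conclusion (1) gives that $\Omega^1_{X/k}$ is \'{e}tale trivializable, and conclusion (2) forbids $X$ from containing any rational curves. The goal is to derive a contradiction from conclusion (1), namely from the \'{e}tale triviality of the cotangent bundle. The key observation is that a surface whose $\Omega^1_{X/k}$ becomes trivial on a finite \'{e}tale cover $\pi:Y\to X$ has, on $Y$, a trivial cotangent bundle, so $c_2(\Omega^1_{Y/k})=0$; by multiplicativity of the topological Euler characteristic in \'{e}tale covers this forces $c_2(\Omega^1_{X/k})=0$, i.e.\ the second Chern number (topological Euler characteristic) of $X$ vanishes. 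I would therefore compute this invariant for each of the three classes and show it is nonzero.

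The Euler characteristic computation is where the three cases diverge and where I expect most of the bookkeeping to live, so I would treat it as the main obstacle. For a K3 surface one has $c_2=24\neq0$; for an Enriques surface $c_2=12\neq0$; and a quasi-hyperelliptic surface, while it does have $c_2=0$, is instead most cleanly excluded through conclusion (2), since its Albanese fibration is an elliptic or quasi-elliptic fibration over a curve and the generic/degenerate fibre structure produces rational curves (the cuspidal or otherwise rational components forced by quasi-ellipticity in characteristic $2$ or $3$), contradicting the absence of rational curves. Thus the clean unified statement is: if such a lifting existed, either $c_2(X)=0$, which fails for K3 and Enriques surfaces, or $X$ would have to be free of rational curves, which fails for quasi-hyperelliptic surfaces. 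Either way we reach a contradiction.

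The remaining care is in the positive characteristic subtleties: I would note that the \'{e}tale-triviality-to-vanishing-$c_2$ step uses only that Chern classes pull back under \'{e}tale maps and that Euler characteristics are multiplicative, both of which hold in characteristic $p$, and that for the Enriques case in $p=2$ one must allow the canonical bundle to be trivial (the non-classical Enriques surfaces) without affecting torsionness. Hence the hypothesis of Proposition \ref{toris} holds uniformly, and the contradiction is obtained in each case, completing the proof.
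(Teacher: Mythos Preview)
Your argument is correct, and for the quasi-hyperelliptic case it coincides with the paper's: both invoke conclusion (2) of Proposition~\ref{toris} and point to the cuspidal rational fibres of the quasi-elliptic fibration.

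For K3 and Enriques surfaces, however, you take a genuinely different route from the paper. The paper handles the K3 case via conclusion (1) by noting that a K3 surface is (algebraically) simply connected, so \'{e}tale triviality of $\Omega^1_{X/k}$ would force it to be globally trivial, which it is not; it also remarks that K3 surfaces contain rational curves by \cite{bogomolov2011constructing}, giving a second contradiction via (2). For Enriques surfaces the paper works entirely through conclusion (2), splitting into cases: in characteristic $\neq 2$ (and for singular Enriques in characteristic $2$) one passes to the K3 double cover, while classical and supersingular Enriques surfaces in characteristic $2$ are unirational by \cite{blass1982unirationality}. Your approach instead extracts from conclusion (1) the numerical consequence $c_2(X)=0$ (via pullback of Chern classes along a finite \'{e}tale cover and Noether's formula), which fails since $c_2=24$ for K3 and $c_2=12$ for Enriques, uniformly in all characteristics. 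Your argument is more economical---it avoids the case analysis in characteristic $2$ and does not appeal to the nontrivial inputs on rational curves on K3 surfaces or unirationality of Enriques surfaces---while the paper's argument is more geometric and showcases both obstructions (1) and (2) in action.
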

\begin{proof} Note that the canonical line bundles of all these surfaces are torsion, it suffices prove they do not satisfy one of the properties in
Proposition \ref{toris}. If $X$ is a K3 surface, it doesn't satisfy (1) since it is simply connected and has nontrivial tangent bundle. By a recent result \cite[Proposition 17]{bogomolov2011constructing}, a K3 surface doesn't satisfy property (2) either.

\smallskip If $X$ is an Enriques surface in characteristic $p\neq2$ or a singular Enriques surface in characteristic 2 \cite[Theorem 2.7]{crew1984etale}, then it has an \'{e}tale cover by a K3 surface. If $X$ is is a classical or supersingular Enriques surface in characteristic 2, then it is unirational by \cite[Theorem 2]{blass1982unirationality}. Therefore, all Enriques surfaces do not satisfy property (2).

\smallskip For all quasi-hyperelliptic surfaces, there exist a fibration by cuspidal rational curves \cite[Proposition 5]{BMII1977} hence they do not satisfy property (2).

\end{proof}

\subsubsection*{Hyperelliptic Surfaces}\hfill

\medskip\noindent A complete classification of hyperelliptic surfaces is given in the list \cite[page 37]{BMII1977}, in which each surface is isomorphic to a quotient of the product of two elliptic curves by a finite subgroup scheme. Moreover, the canonical line bundle of a hyperelliptic surface is torsion of order equals to $1,2,3,4$ or $6$ \cite[page 37]{BMII1977}.

By Corollary \ref{lfec}, if $X$ admits a lifting of its Frobenius to $W_2(k)$, then the aforementioned elliptic curves are both ordinary since their Frobenius can be lifted to $W_2(k)$. Therefore, if $X$ is of type b) c) or d) in characteristic $2$ or $3$ then $F_X$ cannot be lifted, since one of the two elliptic curves is supersingular.

\begin{lemma}\label{dolga} Let $f:Y\rightarrow X$ be a Galois cover belonging to one of the following types
\begin{enumerate}
\item the Galois group is $\mathds{Z}/2\mathds{Z}$ and $p\neq2$, $2\:|\:p-1$,
\item the Galois group is $\mathds{Z}/3\mathds{Z}$ and $p\neq3$, $3\:|\:p-1$,
\item the Galois group is $\mathds{Z}/4\mathds{Z}$ and $p\neq2$, $4\:|\:p-1$.
\end{enumerate}
Suppose $h^i(B^1_Y)=0$ for all $i$, then $h^i(B^1_X)=0$ for all $i$.

\end{lemma}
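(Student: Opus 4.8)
The plan is to compare the coboundary sheaves $B^1_X$ and $B^1_Y$ through the cover $f$ and then descend the vanishing of cohomology using the $G$-action. The essential inputs are that $f$ is \'{e}tale -- which is the relevant case, since the covers $E_1\times E_2\to X$ produced by the hyperelliptic classification have $G$ acting freely -- together with the hypothesis that $|G|$ is prime to $p$ and that $k$ contains the $|G|$-th roots of unity. The latter is exactly what conditions (1)--(3) guarantee: $p\neq 2,3$ makes $|G|\in\{2,3,4\}$ invertible in $k$, while $2\mid p-1$, $3\mid p-1$, $4\mid p-1$ place $\mu_2,\mu_3,\mu_4$ inside $k^\times$.

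First I would establish the key identification $f^*B^1_X\cong B^1_Y$ as $\mathcal{O}_Y$-modules, compatibly with the $G$-action. Since $f$ is \'{e}tale, its relative Frobenius is an isomorphism, so the square formed by $f$ and the Frobenius morphisms $F_X,F_Y$ is Cartesian; as $F_X$ is finite flat, flat base change yields $f^*F_{X*}\mathcal{O}_X\cong F_{Y*}f^*\mathcal{O}_X=F_{Y*}\mathcal{O}_Y$. This isomorphism carries the canonical subsheaf $f^*\mathcal{O}_X=\mathcal{O}_Y$ onto $\mathcal{O}_Y\subset F_{Y*}\mathcal{O}_Y$, so passing to quotients gives the desired $G$-equivariant isomorphism $f^*B^1_X\cong B^1_Y$. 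In short, the formation of the coboundary sheaf commutes with the \'{e}tale base change $f$.

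Next I would decompose the cohomology. Because $|G|$ is prime to $p$ and $G$ is cyclic with all its characters valued in $k^\times$, the $\mathcal{O}_X$-algebra $f_*\mathcal{O}_Y$ splits into eigen--line bundles $f_*\mathcal{O}_Y\cong\bigoplus_{\chi\in\widehat{G}}L_\chi$, with $L_{\chi_0}=\mathcal{O}_X$ for the trivial character $\chi_0$, where $\widehat{G}=\mathrm{Hom}(G,k^\times)$. Since $f$ is finite, $f_*$ is exact and the projection formula applies, giving $H^i(Y,f^*B^1_X)\cong H^i(X,B^1_X\otimes f_*\mathcal{O}_Y)$, whence
\[
H^i(Y,B^1_Y)\cong\bigoplus_{\chi\in\widehat{G}}H^i\!\bigl(X,B^1_X\otimes L_\chi\bigr)\qquad(i\ge 0).
\]
The summand indexed by $\chi_0$ is precisely $H^i(X,B^1_X)$. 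Hence the assumption $h^i(B^1_Y)=0$ forces every summand, and in particular $H^i(X,B^1_X)$, to vanish, which is the assertion of the lemma.

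The step I expect to require the most care is the identification $f^*B^1_X\cong B^1_Y$: it is where \'{e}taleness of $f$ is indispensable, because for a ramified cover the Cartesian property of the Frobenius square fails and a ramification correction term appears. This is the reason the hypotheses are arranged so that the covers in play are unramified. The splitting of $f_*\mathcal{O}_Y$ into line bundles is the other place the arithmetic conditions enter: without $\mu_{|G|}\subset k$ the eigenspaces would only be defined after a base change, and the isolation of the trivial-character summand carrying $H^i(X,B^1_X)$ would be less transparent.
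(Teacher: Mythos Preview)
Your argument is correct and takes a somewhat different route from the paper's. Both proofs exploit the eigen--decomposition $f_*\mathcal{O}_Y\cong\bigoplus_\chi L_\chi$, but you go through the pullback: you use the Cartesian Frobenius square for \'etale $f$ to get $f^*B^1_X\cong B^1_Y$, then apply the projection formula to split $H^i(Y,B^1_Y)$ into summands $H^i(X,B^1_X\otimes L_\chi)$, of which the trivial--character piece is $H^i(X,B^1_X)$. The paper instead works on the pushforward side: it writes $f_*F_{Y*}\mathcal{O}_Y\cong F_{X*}f_*\mathcal{O}_Y\cong\bigoplus_\chi F_{X*}L_\chi$ and checks by hand (via adjointness and $\mathrm{Hom}(L_\chi,\mathcal{O}_X)=0$) that the inclusion $f_*\mathcal{O}_Y\hookrightarrow F_{X*}f_*\mathcal{O}_Y$ respects the eigen-splitting, so that $B^1_X=F_{X*}\mathcal{O}_X/\mathcal{O}_X$ sits as a direct summand of $f_*B^1_Y$. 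Your approach is cleaner and more conceptual; the paper's is more hands-on. One small correction: your explanation of why the hypotheses $n\mid p-1$ are imposed is off --- since $k$ is algebraically closed, $\mu_n\subset k^\times$ holds automatically whenever $(n,p)=1$, so the eigen-decomposition of $f_*\mathcal{O}_Y$ never needs $n\mid p-1$. In the paper's argument that condition is what guarantees $F_X^*L_\chi\cong L_\chi^{\,p}\cong L_\chi$, making the inclusion diagonal; your argument, by contrast, never uses it and would go through with only $(|G|,p)=1$. Both proofs tacitly assume $f$ is \'etale (the paper through ``$\mathcal{L}$ is torsion of order $2$''); you are right that this is the only case needed in the applications.
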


\begin{proof} It is easy to see we need only to treat the case when the cover is nontrivial. Suppose we are in case (1), then by \cite[proposition 0.1.6, 0.1.8]{cossec1989enriques}, we always have $f_*\mathcal{O}_Y\cong\mathcal{O}_X\oplus\mathcal{L}$, where $\mathcal{L}$ is a torsion line bundle on $X$ of order $2$. Therefore, \begin{equation}\notag f_*F_{Y*}\mathcal{O}_Y\cong F_{X*}f_*\mathcal{O}_Y\cong F_{X*}\mathcal{O}_X\oplus F_{X*}\mathcal{L}.\end{equation}

We claim the images of sub line bundles $\mathcal{O}_X$ and $\mathcal{L}$ of $f_*\mathcal{O}_Y$ under the inclusion $f_*\mathcal{O}_Y\hookrightarrow f_*F_{Y*}\mathcal{O}_Y$ fall in $F_{X*}\mathcal{O}_X$ and $F_{X*}\mathcal{L}$ respectively.
Indeed, since $\mathcal{L}$ is nontrivial, we have $H^0(F_{X*}\mathcal{L})=0$ hence there exists no nonzero morphism from $\mathcal{O}_X$ to $F_{X*}\mathcal{L}$. On the other hand, let $\mathcal{L}\rightarrow F_{X*}\mathcal{O}_X$ be a nonzero morphism, then by adjointness, we are given a nonzero morphism $F_X^*\mathcal{L}\rightarrow\mathcal{O}_X$. As $\mathrm{char}k$ is odd, $F_X^*\mathcal{L}\cong \mathcal{L}$, thus we are given a nonzero morphism $\mathcal{L}\rightarrow\mathcal{O}_X$, again impossible. Therefore, the quotient $B^1_X\cong F_{X*}\mathcal{O}_X/\mathcal{O}_X$ is a direct summand of $f_*B^1_Y=f_*F_{Y*}\mathcal{O}_Y/f_*\mathcal{O}_Y$ hence the lemma is proved in this case.

\smallskip The remaining cases can be proved similarly by using \cite[Proposition 7.1]{miranda1985triple} and \cite[7.2 Case 1]{hahn1996quadruple}. It suffices to observe $f_*\mathcal{O}_Y$ decomposes into direct sum of pairwise non-isomorphic torsion line bundles with order a factor of 3 and 4 respectively.

\end{proof}

\begin{pro} Let $X$ be a hyperelliptic surface of type b), c) or d) in characteristic $p\neq2,3$, then the Frobenius $F_X$ can be lifted to $W_2(k)$ if and only if

 \begin{enumerate}
 \item the associated elliptic curves $E_0, E_1$ are ordinary,
 \item $\omega^{\otimes(p-1)}_{X/k}\cong\mathcal{O}_X$.
 \end{enumerate}

\end{pro}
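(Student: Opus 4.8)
The plan is to prove both directions of the equivalence, using the étale cover structure and the classification of hyperelliptic surfaces.

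For the necessity direction, suppose $F_X$ lifts to $W_2(k)$. Condition (1) is essentially already established in the discussion preceding the proposition: a hyperelliptic surface is a quotient of $E_0 \times E_1$ by a finite group, and by Corollary \ref{lfec} the liftability of $F_X$ forces liftability of the Frobenius on the covering abelian surface $E_0 \times E_1$, which forces both $E_0$ and $E_1$ to be ordinary. So I need to focus on deriving condition (2). Here I would invoke Proposition \ref{toris}: since $\omega_{X/k}$ is torsion, the lifting produces an isomorphism $\varphi_{\scriptscriptstyle F_{\boldsymbol{X}}}: F_X^*\Omega^1_{X/k} \xrightarrow{\sim} \Omega^1_{X/k}$, and in particular $\Omega^1_{X/k}$ is étale trivializable. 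Taking determinants gives an isomorphism $\wedge\varphi: \omega_{X/k}^{\otimes p} \xrightarrow{\sim} \omega_{X/k}$, i.e. $\omega_{X/k}^{\otimes(p-1)} \cong \mathcal{O}_X$. This is exactly condition (2), so necessity should follow cleanly.

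For sufficiency, the strategy is to use Proposition \ref{MS}, which identifies the obstruction to lifting $(\boldsymbol{X}, F_{\boldsymbol{X}})$ with a class in $H^1(X, T_X \otimes B^1_X)$. First I would observe that conditions (1) and (2) allow me to reduce the problem. The type b), c), d) surfaces with $p \neq 2,3$ sit, via the canonical order, under Galois covers of the types enumerated in Lemma \ref{dolga}; the point is that the associated cyclic cover $f: Y \to X$ (with $Y$ an appropriate intermediate quotient, ultimately covered by the ordinary abelian surface $E_0 \times E_1$) satisfies the numerical divisibility hypotheses $2 \mid p-1$, $3 \mid p-1$, or $4 \mid p-1$ precisely because condition (2) forces $\omega^{\otimes(p-1)}_{X/k} \cong \mathcal{O}_X$ and the canonical order divides the relevant integer. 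On the covering abelian surface, the Frobenius is liftable (condition (1) plus the abelian case treated elsewhere), so the relevant cohomology $H^i(B^1_Y)$ vanishes by ordinariness. Applying Lemma \ref{dolga} then yields $h^i(B^1_X) = 0$ for all $i$.

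The main obstacle, and the crux of the argument, is to pass from the vanishing $h^i(B^1_X) = 0$ to the vanishing of the obstruction space $H^1(X, T_X \otimes B^1_X)$. The bare vanishing of $H^i(B^1_X)$ does not immediately kill $H^1(X, T_X \otimes B^1_X)$ unless $T_X$ is suitably trivial. This is where condition (2) together with property (1) of Proposition \ref{toris} enters decisively: since $\Omega^1_{X/k}$ is étale trivializable, so is $T_X$, meaning there is a finite étale cover $\pi: X' \to X$ with $\pi^* T_X \cong \mathcal{O}_{X'}^{\oplus 2}$. I would then compute $H^1(X, T_X \otimes B^1_X)$ after pulling back along $\pi$, reducing the twisted cohomology to (a summand of) $H^1(X', B^1_{X'})^{\oplus 2}$, which vanishes by Lemma \ref{dolga} applied to $X'$ — or, more directly, conclude that $T_X \otimes B^1_X$ is, up to the étale descent, a sum of copies of $B^1_X$ whose cohomology vanishes. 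Once the obstruction space is shown to vanish, Proposition \ref{MS} guarantees the existence of the lifting $(\boldsymbol{X}, F_{\boldsymbol{X}})$, completing the sufficiency direction and hence the proposition. I expect the careful bookkeeping of which cover realizes the étale trivialization, and verifying compatibility between the group-theoretic type of the cover and the divisibility conditions of Lemma \ref{dolga}, to be the delicate part.
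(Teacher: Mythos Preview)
Your necessity argument is correct and matches the paper. The sufficiency argument, however, contains a genuine circularity.

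You write: ``This is where condition (2) together with property (1) of Proposition \ref{toris} enters decisively: since $\Omega^1_{X/k}$ is \'etale trivializable\ldots'' But Proposition \ref{toris} has as a \emph{hypothesis} that $X$ admits a $W_2$-lifting of its Frobenius; its conclusion is the \'etale trivializability. In the sufficiency direction you are trying to \emph{construct} such a lifting, so you cannot invoke Proposition \ref{toris} to obtain the trivialization of $T_X$. Your subsequent descent argument therefore rests on nothing.

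The paper avoids this by using a structural fact about hyperelliptic surfaces that holds unconditionally: by \cite[page 495]{lang1979quasi} one has $T_X \cong \mathcal{O}_X \oplus \omega_{X/k}^{-1}$. Hence
\[
H^1(X, T_X \otimes B^1_X) \;\cong\; H^1(X, B^1_X) \,\oplus\, H^1(X, \omega_{X/k}^{-1} \otimes B^1_X).
\]
The first summand vanishes by Lemma \ref{dolga}, essentially as you outline. For the second summand the paper does \emph{not} use descent: instead, condition (2) gives $F_X^*\omega_{X/k}^{-1} \cong \omega_{X/k}^{-p} \cong \omega_{X/k}^{-1}$, so by the projection formula $H^i(F_{X*}\mathcal{O}_X \otimes \omega_{X/k}^{-1}) \cong H^i(\omega_{X/k}^{-1})$, and Lang's \cite[Theorem 4.9]{lang1979quasi} gives $h^1(\omega_{X/k}^{-1}) = h^2(\omega_{X/k}^{-1}) = 0$ for types b), c), d). The long exact sequence attached to
\[
0 \to \omega_{X/k}^{-1} \to F_{X*}\mathcal{O}_X \otimes \omega_{X/k}^{-1} \to B^1_X \otimes \omega_{X/k}^{-1} \to 0
\]
then kills $H^1(\omega_{X/k}^{-1} \otimes B^1_X)$. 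This is where condition (2) is actually used in the sufficiency direction --- to make $\omega_{X/k}^{-1}$ Frobenius-invariant --- not to trigger an \'etale trivialization.
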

\begin{proof} The necessity of condition (1) follows from Proposition \ref{lfec} and necessity of condition (2) is contained in the proof of Proposition \ref{toris}. Next we prove the sufficiency. By Proposition \ref{MS}, it suffices to show the obstruction space $H^1(T_X\otimes B^1_X)$ is $0$. Since the tangent bundle $T_X$ splits as $T_X\cong\mathcal{O}_X\oplus\omega^{-1}_{X/k}$ \cite[page 495]{lang1979quasi}, thus we are reduced prove $h^1(B^1_X)=h^1(\omega^{-1}_{X/k}\otimes B^1_X)=0$.

\smallskip For any hyperelliptic surface of type b), c), or d), one can find an abelian surface or a hyperelliptic surface $Y$ and a cyclic Galois cover $Y\rightarrow X$ of order 2, 3, or 4, see \cite[page 37]{BMII1977}. Then by the orders of $\omega_{X/k}$ listed on \cite[page 37]{BMII1977} and condition (2) of this proposition, we have the divisibility conditions required in lemma \ref{dolga}. Therefore, the vanishing of $H^1(B^1_X)$ follows from lemma \ref{dolga}.

\smallskip On the other hand, if $X$ is of type b), c) or d), by \cite[Theorem 4.9]{lang1979quasi} we have $h^1(\omega^{-1}_{X/k})=h^2(\omega^{-1}_{X/k})=0$. By projection formula and condition (2) we obtain $h^1(F_{X*}\mathcal{O}_X\otimes\omega^{-1}_{X/k})=h^1(F_X^*\omega^{-1}_{X/k})=h^1(\omega^{-1}_{X/k})=0$. Therefore, by studying the long exact sequence derived from
$$0\rightarrow\omega^{-1}_{X/k}\rightarrow F_{X*}\mathcal{O}_X\otimes\omega^{-1}_{X/k}\rightarrow B_X^1\otimes\omega^{-1}_{X/k}\rightarrow0,$$
\noindent one can see easily $h^1(\omega^{-1}_{X/k}\otimes B_X^1)=0$ hence the proposition is proved.

\end{proof}
\begin{pro}\label{typea} Let $X$ be a hyperelliptic surface of type a) in characteristic $p\neq2$ such that the associated elliptic curves $E_0, E_1$ are ordinary, then the Frobenius $F_X$ can be lifted to $W_2(k)$.

\end{pro}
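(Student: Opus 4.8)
The plan is to lift the Frobenius by producing an explicit lifting of the covering abelian surface together with a compatible group action, then descending. Recall from \cite[page 37]{BMII1977} that a type a) hyperelliptic surface is a quotient $X = (E_0\times E_1)/G$ where $G = \mathds{Z}/n\mathds{Z}$ acts on $E_0$ by translation and on $E_1$ by an automorphism fixing the origin, and for type a) the canonical bundle already satisfies $\omega_{X/k}\cong\mathcal{O}_X$, so the numerical obstruction that killed types b),c),d) in small characteristic does not intervene. Since $E_0, E_1$ are assumed ordinary, each has a canonical (Serre--Tate) lifting $\boldsymbol{E}_0, \boldsymbol{E}_1$ to $W_2(k)$ on which the Frobenius lifts; this is exactly the content of the ordinary case for curves, and it is the starting datum I would exploit.

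First I would set $\boldsymbol{A} = \boldsymbol{E}_0\times_{W_2(k)}\boldsymbol{E}_1$ with its product lifting of Frobenius $F_{\boldsymbol{A}} = F_{\boldsymbol{E}_0}\times F_{\boldsymbol{E}_1}$. The key step is to lift the group action of $G$ to $\boldsymbol{A}$ in a way that commutes with $F_{\boldsymbol{A}}$. The translation part on $\boldsymbol{E}_0$ lifts canonically because the canonical lifting carries the lifted group structure, so an $n$-torsion point lifts to an $n$-torsion point of $\boldsymbol{E}_0$; the automorphism part on $\boldsymbol{E}_1$ (an automorphism of order dividing $n$ fixing the origin, namely $\pm1$ in type a)) lifts to an automorphism of the canonical lifting $\boldsymbol{E}_1$ by functoriality of the Serre--Tate construction. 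Because the canonical lifting of an ordinary abelian variety is functorial in the variety and compatible with its Frobenius lifting, the lifted automorphism $\boldsymbol{g}$ of $\boldsymbol{A}$ satisfies $\boldsymbol{g}\circ F_{\boldsymbol{A}} = F_{\boldsymbol{A}}\circ\boldsymbol{g}$. Then $\boldsymbol{X} := \boldsymbol{A}/G$ is a flat lifting of $X$ over $W_2(k)$, the quotient exists because $G$ acts freely (being a free action on the special fiber and lifting a free action), and the descended morphism $F_{\boldsymbol{X}}$ reduces to $F_X$ on the special fiber.

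The main obstacle I anticipate is verifying compatibility of the lifted $G$-action with $F_{\boldsymbol{A}}$, i.e. that the Serre--Tate canonical lifting is genuinely functorial for the specific automorphisms and translations occurring in the type a) list, and that the action remains free after lifting so that the quotient is again smooth and flat. The commutation $\boldsymbol{g}\circ F_{\boldsymbol{A}} = F_{\boldsymbol{A}}\circ\boldsymbol{g}$ is the crux: it is what guarantees $F_{\boldsymbol{A}}$ descends to the quotient $\boldsymbol{X}$. An equivalent and perhaps cleaner route, which I would keep in reserve, is to apply Proposition \ref{MS} and check directly that the obstruction class in $H^1(X, T_X\otimes B^1_X)$ vanishes; here $T_X\cong\mathcal{O}_X\oplus\omega^{-1}_{X/k}$ by \cite[page 495]{lang1979quasi} and $\omega_{X/k}\cong\mathcal{O}_X$ for type a), so $T_X\cong\mathcal{O}_X^{\oplus2}$ and one must show $h^1(B^1_X)=0$; but the descent argument via the canonical lifting has the advantage of producing the lifting explicitly rather than merely asserting its existence, which matches the paper's stated strategy of constructing liftings directly for the ordinary cases.
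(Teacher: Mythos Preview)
Your proposal contains a factual error that undermines the reserve argument and leaves the main argument with a genuine gap.

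\textbf{The error about the canonical bundle.} For a type a) hyperelliptic surface one has $\omega_{X/k}$ of order exactly $2$, not $\omega_{X/k}\cong\mathcal{O}_X$; see \cite[page 37]{BMII1977}. (What is true is that $\omega_{X/k}^{\,p-1}\cong\mathcal{O}_X$ automatically when $p$ is odd, which is why type a) needs no extra numerical hypothesis.) Consequently $T_X\cong\mathcal{O}_X\oplus\omega_{X/k}$ with $\omega_{X/k}$ nontrivial $2$-torsion, and your reserve route via Proposition~\ref{MS} does \emph{not} reduce to the single vanishing $h^1(B^1_X)=0$: you must also prove $h^1(B^1_X\otimes\omega_{X/k})=0$. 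This second vanishing is precisely the substance of the paper's proof, which identifies $B^1_X\otimes\omega_{X/k}\cong F_{X*}\omega_{X/k}/\omega_{X/k}$ and shows it is a direct summand of $f_*B^1_Y$ for the abelian cover $f:Y\to X$, treating cases a1) and a2) separately via the structure of $f_*\mathcal{O}_Y$.

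\textbf{The gap in the Serre--Tate descent.} Your main line is a genuinely different and attractive strategy, but the step you yourself flag as the crux is not actually carried out. Serre--Tate functoriality applies to \emph{homomorphisms} of abelian varieties, and this handles the involution $-1$ on $E_1$. It does \emph{not} directly handle the translation by a $2$-torsion point on $E_0$, because a translation is not a group homomorphism. To descend $F_{\boldsymbol{A}}$ to $\boldsymbol{A}/G$ you need, for the lifted translation $\tau_{\boldsymbol{t}}$, an identity of the form $F_{\boldsymbol{A}}\circ\tau_{\boldsymbol{t}}=\tau_{\boldsymbol{t}'}\circ F_{\boldsymbol{A}}$ with $\boldsymbol{t}'$ again a lifted $2$-torsion point generating the same subgroup; equivalently, that the lifted relative Frobenius carries $\boldsymbol{t}$ to the appropriate torsion point of $\boldsymbol{E}_0^{(\sigma)}$. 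This is plausible but requires an argument beyond ``functoriality of the Serre--Tate construction''. Note also that the paper's introduction says the direct-construction method is used for case (2)(b) (ruled surfaces), whereas (1)(b) is handled by vanishing of the obstruction space---so the paper itself takes the cohomological route here.
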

\begin{proof} By Proposition \ref{MS} it suffices to prove $H^1(T_X\otimes B_X)=0$. In this case we still have $T_X\cong\mathcal{O}_X\oplus\omega_{X/k}$ and $\omega_{X/k}$ is of order 2 \cite[Theorem 4.9]{lange1977vektorbundel}. Thus we are reduced to prove $h^1(B^1_X)=h^1(B^1_X\otimes\omega_{X/k})=0.$ By lemma \ref{dolga}, one can see easily $h^1(B^1_X)=0$ in both cases a1) and a2).

\smallskip Now we prove $h^1(B^1_X\otimes\omega_{X/k})=0$. Since $\omega_{X/k}$ is of order 2 and $p$ is odd, we have $F_{X*}\mathcal{O}_X\otimes\omega_{X/k}\cong F_{X*}(F^*_X\omega_{X/k})\cong F_{X*}\omega_{X/k}.$
\noindent Therefore, $B^1_X\otimes\omega_{X/k}\cong F_{X*}\omega_{X/k}/\omega_{X/k}$. Let $Y=E_0\times E_1$ and $f:Y\rightarrow X$ be the \'{e}tale cover described on \cite[page 37]{BMII1977}, if we can prove $F_{X*}\omega_{X/k}/\omega_{X/k}$ is isomorphic to a direct summand of $f_*B^1_Y$ then we are done.

\smallskip If $X$ is of type a1), then by \cite[Propostion 0.1.3]{cossec1989enriques}, we have
\begin{equation}\label{pbc}\omega_{Y/k}\cong f^*(\omega_{X/k}\otimes L^{-1}),\end{equation}
\noindent where $L$ is the line bundle such that $f_*\mathcal{O}_Y\cong\mathcal{O}_X\oplus L$. On the other hand since $f$ is \'{e}tale, $\omega_{Y/k}\cong f^*\omega_{X/k}$ hence $f^*L\cong f^*\omega_{X/k}$ is trivial. By \cite[Theorem 2.1]{jensen1978picard} we have $L$ is isomorphic either to $\mathcal{O}_X$ or $\omega_{X/k}$. Note the cover $f$ is nontrivial, hence $L$ is also nontrivial so $\omega_{X/k}\cong L$. By the proof of lemma \ref{dolga}, we can see $F_{X*}L/L$ is a direct summand of $f_*B^1_{Y}$, hence so is true for $F_{X*}\omega_{X/k}/\omega_{X/k}$.

\smallskip If $X$ is of type a2), let $Z=Y/\mathds{Z}/2\mathds{Z}$ and $g:Y\rightarrow Z$, $h:Z\rightarrow X$ be the intermediate \'{e}tale covers such that $f=hg$. Moreover, let $g_*\mathcal{O}_Y\cong\mathcal{O}_Z\oplus L_Z$ and $h_*\mathcal{O}_Z\cong\mathcal{O}_X\oplus L_X$. Then by the proof for type a1) case, we have $L_Z$ satisfies $L_Z\cong\omega_{Z/k}\cong h^*\omega_{X/k}$. Therefore, we have the following decomposition
$$f_*\mathcal{O}_Y\cong \mathcal{O}_X\oplus L_X\oplus\omega_{X/k}\oplus\omega_{X/k}\otimes L_X$$
\noindent Moreover, by formula \ref{pbc}, $\omega_{X/k}\ncong L_X$ since $\omega_{Y/k}$ is nontrivial. Therefore, the four line bundles on the right side are pairwisely non-isomorphic. One the other hand, as these line bundles are of order 1 or 2, one can prove similarly as in lemma \ref{dolga} that $F_{X*}\omega_{X/k}/\omega_{X/k}$
is a direct summand of $F_{X*}f_*\mathcal{O}_Y/f_*\mathcal{O}_Y\cong f_*B^1_Y$.

\end{proof}

\begin{pro} Let $X$ be a hyperelliptic surface of type a) in characteristic 2, then $F_X$ is liftable to $W_2(k)$ if and only if $E_0, E_1$ are ordinary.
\end{pro}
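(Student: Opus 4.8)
The plan is to prove both directions of the equivalence for type a) hyperelliptic surfaces in characteristic $2$. The necessity of ordinarity of $E_0, E_1$ is immediate from Corollary \ref{lfec}: an \'{e}tale cover $f: E_0\times E_1\to X$ exists (see \cite[page 37]{BMII1977}), so liftability of $F_X$ forces liftability of $F_{E_0\times E_1}$, which in turn forces each elliptic curve to be ordinary, since a supersingular elliptic curve has non-liftable Frobenius (its Frobenius pullback on differentials is zero, contradicting Proposition \ref{gb}). So the substance is the sufficiency direction, and I expect this to require genuinely new work because the arguments used for $p\neq 2$ break down in characteristic $2$.

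\smallskip The natural first attempt is to mimic Proposition \ref{typea}: use Proposition \ref{MS} to reduce to showing $H^1(T_X\otimes B^1_X)=0$, and exploit the splitting $T_X\cong\mathcal{O}_X\oplus\omega_{X/k}$ together with the torsion of $\omega_{X/k}$. The obstacle is that Lemma \ref{dolga} explicitly requires $p\neq 2$ (the case division there needs the Galois degree to be prime to $p$ and the relevant roots of unity to lie in $k$), so the vanishing $h^1(B^1_X)=0$ can no longer be deduced by descent from the abelian cover. In characteristic $2$ the cover $f: E_0\times E_1\to X$ has degree a power of $2$, which is inseparable-adjacent to the Frobenius and makes $f_*\mathcal{O}_Y$ fail to split into a sum of line bundles on which the $B^1$-computation is transparent. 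Because of this, I would not try to prove the obstruction space vanishes.

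\smallskip Instead, since the main theorem's statement \textnormal{(1)(b)} signals that type a) in characteristic $2$ is covered by a direct construction rather than an obstruction-vanishing argument, my plan is to construct the lifting of $F_X$ explicitly. Concretely, I would lift $Y=E_0\times E_1$ canonically: an ordinary elliptic curve admits a canonical (Serre--Tate) lifting $\boldsymbol{E}_i$ to $W(k)$ carrying a lift of its Frobenius, and the product $\boldsymbol{E}_0\times\boldsymbol{E}_1$ gives a $W_2$-lifting $\boldsymbol{Y}$ together with $F_{\boldsymbol{Y}}$. The key step is then to descend this data along the finite group-scheme quotient defining $X$: I would lift the finite group action on $Y$ to a compatible action on $\boldsymbol{Y}$ commuting with $F_{\boldsymbol{Y}}$, so that the quotient $\boldsymbol{X}=\boldsymbol{Y}/G$ is a flat $W_2$-lifting of $X$ and $F_{\boldsymbol{Y}}$ descends to $F_{\boldsymbol{X}}$. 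The hard part will be verifying that the group action (which in the characteristic-$2$ type a) case involves translations and an involution, see \cite[page 37]{BMII1977}) genuinely lifts to $\boldsymbol{Y}$ in a way that is simultaneously compatible with the canonical $F_{\boldsymbol{Y}}$; the functoriality of the canonical lifting for ordinary abelian varieties is what should make this work, since automorphisms of $Y$ respecting the structure lift canonically and automatically commute with the canonical Frobenius lift.

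\smallskip Assembling these pieces, the lifting $(\boldsymbol{X}, F_{\boldsymbol{X}})$ exhibits $F_X$ as liftable over $W_2(k)$, completing the sufficiency direction and hence the proposition. I would present the construction at the level of $W_2(k)$ only, using Proposition \ref{dfm} to guarantee that the quotient lifting is flat and that the descended morphism restricts to $F_X$ on the special fiber, and I would note that the compatibility of $F_{\boldsymbol{Y}}$ with the $G$-action is precisely the condition needed for $F_{\boldsymbol{X}}$ to be well-defined on the quotient.
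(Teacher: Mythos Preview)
Your proposal diverges from the paper's proof in both directions, and the necessity argument contains a genuine gap.

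\medskip\noindent\textbf{Necessity.} You assert that $E_0\times E_1\to X$ is \'{e}tale and apply Corollary \ref{lfec}. In characteristic $2$, however, type a) splits into subcases a1) and a3) (cf.\ \cite[page 37]{BMII1977}), and in case a3) the group scheme defining the quotient contains a copy of $\mu_{2,k}$, so the full cover $E_0\times E_1\to X$ is \emph{not} \'{e}tale. The paper handles a3) by passing to the intermediate quotient $Z=E_0\times E_1/\mu_{2,k}$, which \emph{is} an \'{e}tale cover of $X$; Corollary \ref{lfec} then forces $Z$, and hence $E_0,E_1$, to be ordinary. Your argument as written only covers a1).

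\medskip\noindent\textbf{Sufficiency.} You explicitly abandon the obstruction-vanishing route because Lemma \ref{dolga} requires $p\nmid|G|$, and you instead propose a Serre--Tate canonical lifting of $E_0\times E_1$ followed by descent along the group quotient. This is a reasonable strategy, but it is \emph{not} what the paper does, and your reading of the introduction is inverted: the phrase ``proving the vanishing of the obstruction space (1)(b)'' refers precisely to the hyperelliptic case, while the direct construction is reserved for ruled surfaces (2)(b). The paper in fact \emph{does} establish $H^1(T_X\otimes B^1_X)=0$ in characteristic $2$, bypassing Lemma \ref{dolga} entirely. For a1) it invokes \cite[Theorem 2, Lemma 1.1]{mehta1987varieties} to show $X$ is ordinary and uses that $T_X$ is trivial. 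For a3) it exploits the $\mu_{2,k}$-torsor $Y=(E_0\times E_1)/(\mathds{Z}/2\mathds{Z})\to X$: the decomposition $f_*\mathcal{O}_Y\cong\mathcal{O}_X\oplus L$ still holds, but now (since $p=2$) both $\mathcal{O}_X$ and $L$ land in $F_{X*}\mathcal{O}_X$ under $f_*\mathcal{O}_Y\hookrightarrow f_*F_{Y*}\mathcal{O}_Y$, yielding $f_*B^1_Y\cong F_{X*}\mathcal{O}_X/(\mathcal{O}_X\oplus L)\oplus F_{X*}L$; ordinarity of $Y$ then forces $H^i(B^1_X)=0$ via a short exact sequence, and the tangent bundle being a self-extension of $\mathcal{O}_X$ finishes the job.

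\medskip Your Serre--Tate approach is not wrong in spirit, but you have left the hardest step unverified: for the canonical Frobenius lift $F_{\boldsymbol{Y}}$ to descend to $\boldsymbol{Y}/G$, you need $F_{\boldsymbol{Y}}$ to be $G$-equivariant, and for a translation by a $2$-torsion point $a$ this amounts to $F_{\boldsymbol{Y}}(a)=a$ over $W_2$, which requires an argument. Moreover, in case a3) the quotient by $\mu_2$ is not \'{e}tale, so the flatness and smoothness of $\boldsymbol{Y}/G$ over $W_2$ would need separate justification.
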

\begin{proof}If $X$ is of type a1) and satisfies the remaining conditions in the proposition, then by \cite[Theorem 2, Lemma 1.1]{mehta1987varieties} $X$ is ordinary. In particular $H^1(B^1_X)=0$. Note that in this case the tangent bundle of $X$ is trivial, hence by Proposition \ref{MS} the Frobenius $F_X$ can be lifted to $W_2(k)$.

\smallskip Now assume $X$ is of type a3), and the associated elliptic curves $E_0, E_1$ are ordinary. Let $Y=E_0\times E_1/(\mathds{Z}/2\mathds{Z})$, then $Y$ is a $\mu_{2,k}$-torsor over $X$ in the flat topology. By \cite[Proposition 0.18]{cossec1989enriques}, we have $f_*\mathcal{O}_Y\cong\mathcal{O}_X\oplus L$, where $L$ is nontrivial torsion line bundle of order 2. As in the proof of lemma \ref{dolga}, $f_*F_{Y*}\mathcal{O}_Y$ factors as $F_{X*}\mathcal{O}_X\oplus F_{X*}L$. One can prove easily the images of the sub line bundles $\mathcal{O}_X$ and $L$ under the inclusion $f_*\mathcal{O}_Y\hookrightarrow f_*F_{Y*}\mathcal{O}_Y$ now both fall in $F_{X*}\mathcal{O}_X$. Then we have
$$f_*B^1_Y\cong f_*F_{Y*}\mathcal{O}_Y/f_*\mathcal{O}_Y\cong F_{X*}\mathcal{O}_X/(\mathcal{O}_X\oplus L)\bigoplus F_{X*}L.$$

By the former part of this proof, $Y$ is ordinary hence $h^i(B_Y^1)=0$ for all $i$. Thus $H^i(F_{X*}\mathcal{O}_X/(\mathcal{O}_X\oplus L))=H^i(F_{X*}L)=0$ for all $i$.
On the other hand, we have the following exact sequence
$$0\rightarrow L\rightarrow F_{X*}\mathcal{O}_X/\mathcal{O}_X\rightarrow F_{X*}\mathcal{O}_X/(\mathcal{O}_X\oplus L)\rightarrow0.$$
\noindent Then after taking long exact sequence one gets $H^i(B^1_X)=0$ for all $i$. Since the tangent bundle of $X$ in this case is a nontrivial extension of $\mathcal{O}_X$ by itself \cite[Theorem 4.11]{lang1979quasi}, one has $H^i(T_X\otimes B^1_X)=0$, hence by proposition \ref{MS} $F_X$ is also liftable in this case.

\smallskip The necessity of the ordinarity of $E_0, E_1$ for case a1) follows from Corollary \ref{lfec}. For case a3) let $Z=E_0\times E_1/\mu_{2,k}$, then we have an \'{e}tale cover $Z\rightarrow X$. Again by Corollary \ref{lfec} $Z$ has also a liftable Frobenius. Thus $Z$ hence $E_0, E_1$ are ordinary.

\end{proof}

To make things more clear, we list the results above ($\checkmark$ for existence of lifting of Frobenius under suitable condition and $\times$ for nonexistsnce) in the following table.

\begin{table}[h]
\begin{center}
\begin{tabular}{ l r r r }

Case & $\mathrm{char}\neq2,3$  & $\mathrm{char}=3$ & $\mathrm{char}=2$ \\ \hline
a    & $\checkmark$           & $\checkmark$            & $\checkmark$    \\
b    & $\checkmark$           & $\times$                & $\times$\\
c    & $\checkmark$           & $\times$                & $\times$\\
d    & $\checkmark$           & $\times$                & $\times$
\end{tabular}

\caption{Hyperelliptic Surfaces}\label{Ta:first}\label{hyes}
\end{center}
\end{table}

\subsection*{Ruled Surfaces}\hfill

\medskip\noindent  The following proposition is the first step in proving the ruled surface part of Theorem \ref{mt}. We will prove it by using deformation theory directly.

\begin{pro}\label{bc}Let $X$ be a ruled surface over a curve $C$, if $F_X$ is liftable to $W_2(k)$, so is $F_C$. In particular, $C$ must be an ordinary elliptic curve or the projective line.\end{pro}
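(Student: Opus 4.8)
The plan is to descend the given $W_2$-lifting of $F_X$ along the ruling. Write $\pi\colon X\to C$ for the (geometrically) ruled structure, so that $\pi$ is smooth and proper with fibres $\mathbb{P}^1$; in particular $\pi_*\mathcal{O}_X\cong\mathcal{O}_C$ and $R^1\pi_*\mathcal{O}_X=0$, whence by the Leray spectral sequence and the projection formula the pullback $\pi^*\colon H^1(C,T_C)\to H^1(X,\pi^*T_C)$ is an isomorphism. Fix the given lift $\boldsymbol{X}$ of $X$ over $W_2(k)$ together with a lift $F_{\boldsymbol{X}}$ of $F_X$.

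First I would lift the base and the ruling compatibly with $\boldsymbol{X}$. As $C$ is a curve, $H^2(C,T_C)=0$, so by the second part of Proposition \ref{dfm} a flat lift $\boldsymbol{C}$ of $C$ over $W_2(k)$ exists and the set of such lifts is a torsor under $H^1(C,T_C)$. For a fixed $\boldsymbol{C}$, the first part of Proposition \ref{dfm}, applied to $X\hookrightarrow\boldsymbol{X}$ with ideal $p\mathcal{O}_{\boldsymbol{X}}\cong\mathcal{O}_X$ and to the smooth target $\boldsymbol{C}$, puts the obstruction to extending $\pi$ to a morphism $\boldsymbol{\pi}\colon\boldsymbol{X}\to\boldsymbol{C}$ in $H^1(X,\pi^*T_C)$. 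A \v{C}ech computation shows that replacing $\boldsymbol{C}$ by the lift associated to a class $\xi\in H^1(C,T_C)$ shifts this obstruction by $\pi^*\xi$; since $\pi^*$ is an isomorphism I can choose $\boldsymbol{C}$ so that the obstruction vanishes and a lift $\boldsymbol{\pi}$ exists. The morphism $\boldsymbol{\pi}$ is then flat (local criterion, as $\boldsymbol{X}$ and $\boldsymbol{C}$ are flat over $W_2(k)$ with flat special fibre $\pi$) and proper, and cohomology and base change promotes $\pi_*\mathcal{O}_X\cong\mathcal{O}_C$, $R^1\pi_*\mathcal{O}_X=0$ to $\boldsymbol{\pi}_*\mathcal{O}_{\boldsymbol{X}}\cong\mathcal{O}_{\boldsymbol{C}}$.

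Next I would produce $F_{\boldsymbol{C}}$ from $g:=\boldsymbol{\pi}\circ F_{\boldsymbol{X}}\colon\boldsymbol{X}\to\boldsymbol{C}$, which is a lift of $\pi\circ F_X=F_C\circ\pi$. The claim is that $g$ contracts every fibre of $\boldsymbol{\pi}$: the fibre over a closed point $c$ is $\pi^{-1}(c)\cong\mathbb{P}^1_k$ (tensoring with the residue field $k$ kills $p$), and the restriction of $g$ to it is a morphism out of a reduced $k$-scheme whose set-theoretic image is the single point $F_C(c)$, hence is constant. Since $\boldsymbol{\pi}$ is proper with $\boldsymbol{\pi}_*\mathcal{O}_{\boldsymbol{X}}\cong\mathcal{O}_{\boldsymbol{C}}$, the universal property of such a contraction (the factorization lemma for Stein morphisms) yields a unique $F_{\boldsymbol{C}}\colon\boldsymbol{C}\to\boldsymbol{C}$ with $g=F_{\boldsymbol{C}}\circ\boldsymbol{\pi}$. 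Reducing the identity $\boldsymbol{\pi}\circ F_{\boldsymbol{X}}=F_{\boldsymbol{C}}\circ\boldsymbol{\pi}$ modulo $p$ and cancelling the epimorphism $\pi$ gives $F_{\boldsymbol{C}}|_C=F_C$, so $F_{\boldsymbol{C}}$ is a $W_2$-lift of the Frobenius of $C$.

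Finally, $C$ is a smooth projective curve whose Frobenius lifts over $W_2(k)$. As noted in the introduction this forces the genus of $C$ to be at most $1$, and by \cite[Proposition 8.6]{illusie2002frobenius} such a curve is ordinary; a genus-one ordinary curve is an ordinary elliptic curve, so $C$ is either $\mathbb{P}^1$ or an ordinary elliptic curve, which is the assertion. I expect the main obstacle to lie in the second paragraph, namely arranging that the ruling deforms together with the prescribed deformation $\boldsymbol{X}$; concretely this rests on the obstruction-comparison formula $o(\boldsymbol{C}')=o(\boldsymbol{C})+\pi^*\xi$, which has to be extracted carefully from the functoriality of the deformation obstruction classes.
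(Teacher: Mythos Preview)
Your argument is correct and is cleaner than the paper's. Both proofs begin the same way: from the fixed lift $\boldsymbol{X}$ one first manufactures a compatible lift $\boldsymbol{\pi}\colon\boldsymbol{X}\to\boldsymbol{C}$. The paper phrases this as a dimension count $h^1(T_{X/C})+h^1(T_{C/k})\ge h^1(T_{X/k})$ coming from the short exact sequence $0\to T_{X/C}\to T_{X/k}\to\pi^*T_{C/k}\to 0$, while you phrase it as the obstruction-shifting formula $o(\boldsymbol{C}')=o(\boldsymbol{C})+\pi^*\xi$ together with the isomorphism $\pi^*\colon H^1(C,T_C)\xrightarrow{\sim}H^1(X,\pi^*T_C)$; these are two readings of the same long exact sequence, and your version is in fact the more precise one.

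The genuine divergence is in extracting $F_{\boldsymbol{C}}$. The paper works in explicit affine charts of the $\mathbb{P}^1$-bundle: on $\mathrm{Spec}\,\mathcal{O}_{\boldsymbol{U}}[\boldsymbol{x}]$ it expands $F_{\boldsymbol{X}}(\boldsymbol{\lambda})=\sum_iF_i(\boldsymbol{\lambda})\boldsymbol{x}^i$, observes that the degree-$0$ term $F_0$ is a lift of $F_U$, and then checks via Corollary~\ref{rcor} that the locally defined $F_0$ and $G_0$ differ on overlaps by a derivation, hence glue to a lift of $F_C$. You instead use the global structure: once $\boldsymbol{\pi}_*\mathcal{O}_{\boldsymbol{X}}\cong\mathcal{O}_{\boldsymbol{C}}$ is known, the composite $\boldsymbol{\pi}\circ F_{\boldsymbol{X}}$ set-theoretically factors through $\boldsymbol{\pi}$ (the topological spaces are those of $X$ and $C$, where the factorisation is $F_C\circ\pi$), so the induced sheaf map $\mathcal{O}_{\boldsymbol{C}}\to (\boldsymbol{\pi}\circ F_{\boldsymbol{X}})_*\mathcal{O}_{\boldsymbol{X}}=(F_C)_*\boldsymbol{\pi}_*\mathcal{O}_{\boldsymbol{X}}=(F_C)_*\mathcal{O}_{\boldsymbol{C}}$ already furnishes $F_{\boldsymbol{C}}$. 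Your route avoids any coordinate bookkeeping and makes the role of the $\mathbb{P}^1$-fibre transparent; the paper's route, on the other hand, produces the same local description that is reused in the subsequent construction of lifts over ordinary elliptic bases. The point you flagged as delicate (the obstruction-comparison formula) is indeed the only step that requires care, but it is standard deformation theory and poses no real difficulty.
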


\smallskip Before proving this proposition, we first recall some facts on ruled surfaces. Let $X$ be a ruled surface over a smooth projective curve $C$, then by \cite[V, Proposition 2.2]{hartshorne1977algebraic} $X$ is isomorphic to the projective bundle $\mathbb{P}(E)$, where $E$ is a rank 2 vector bundle $E$ on $C$. Since any rank 2 vector bundle on a curve is an extension by line bundles \cite[V, Corollary 2.7]{hartshorne1977algebraic} and $\mathbb{P}(E)\cong\mathbb{P}(E\otimes N)$ for any line bundle $N$, we can assume $E$ fits into the following exact sequence
\begin{equation}\label{ex3}0\rightarrow\mathcal{O}_C\rightarrow E\rightarrow L\rightarrow0\end{equation}
\noindent on $C$, where $L$ is a line bundle on $C$.
Let $U,V$ be two open subsets of $C$ over which the vector bundle $E$ is trivialized by $$\alpha_{\scriptscriptstyle U}:E|_U\cong\mathcal{O}_Ue\oplus\mathcal{O}_Uf_{U}\quad \mathrm{and}\quad \alpha_{\scriptscriptstyle V}:E|_V\cong\mathcal{O}_Ve\oplus\mathcal{O}_Vf_V,$$

\noindent where $f_U\in E_U$, $f_V\in E_V$ and $e$ is the global section of $E$ defined by the exact sequence $(\ref{ex3})$. Then we have the following induced isomorphisms over $U$ and $V$
respectively
\begin{equation}\label{gmiso}\beta_{\scriptscriptstyle U}:X_U:=\pi^{-1}(U)\cong\mathbf{Proj}\ \mathcal{O}_U[e,f_U],\quad\beta_{\scriptscriptstyle V}:X_V:=\pi^{-1}(V)\cong\mathbf{Proj}\ \mathcal{O}_V[e,f_V].\end{equation}


\smallskip Now let $t=\frac{e}{f_U}$, $s=\frac{e}{f_V}$, $x=\frac{f_U}{e}$ and $y=\frac{f_V}{e}$, then it is easy to see $X$ is covered by four open affine subschemes isomorphic to $\mathrm{Spec}\,\mathcal{O}_U[t]$, $\mathrm{Spec}\,\mathcal{O}_V[s]$, $\mathrm{Spec}\,\mathcal{O}_U[x]$ and $\mathrm{Spec}\,\mathcal{O}_V[y]$ respectively. Suppose the transition matrix of $E$ from the base $\{f_U,e\}$ to $\{f_V,e\}$ over $U\cap V$ is given by $\binom{a\ b}{0\ 1}$, where $a, b\in\mathcal{O}_{U\cap V}$ and $a$ is invertible. Then the gluing rule between the four open subschemes is given by $tx=sy=1$ and $x=ay+b$.

\begin{proof}[Proof of Proposition \ref{bc}] Given a lifting $\boldsymbol{X}$ of $X$ to $W_2(k)$, we claim $\boldsymbol{X}$ induces a lifting $\boldsymbol{C}$ of $C$ to $W_2(k)$. This is equivalent to say any lifting of $X$ as a $k$-scheme is a lifting of $X$ as $C$-scheme. By Proposition \ref{dfm}, there is a one-to-one correspondence between the above two spaces of liftings and the vector spaces $H^1(X,T_{X/C})$ and $H^1(X,T_{X/k})$ respectively. On the other hand, there is a one-to-one correspondence between the space of liftings of $C$ to $W_2(k)$ and $H^1(C,T_{C/k})$. Therefore, it's enough to show $h^1(T_{X/C})+h^1(T_{C/k})\geq h^1(T_{X/k})$. This inequality follows easily from the long exact sequence derived from
$$0\rightarrow T_{X/C}\rightarrow T_X\rightarrow \pi^*T_C\rightarrow0.$$

Let $Y_U$ (resp. $Y_V$) be the open subscheme of $X_U$ (resp. $X_V$) isomorphic to $\mathrm{Spec}\,k[x]$ (resp. $\mathrm{Spec}\,k[y]$).
Let $\boldsymbol{Y_U}$ (resp. $\boldsymbol{Y_V}$) be the open subschemes of $\boldsymbol{X}$ with the underlying open subset $i(Y_U)$ (resp. $i(Y_V$)), where $i$ is the homeomorphism $X\rightarrow\boldsymbol{X}$. Let $\boldsymbol{C}$ be the lifting of $C$ induced by $\boldsymbol{X}$, then the gluing rule between the two open subschemes $\boldsymbol{Y_U}$, $\boldsymbol{Y_V}$ can be written as
\begin{equation}\label{trs}\boldsymbol{x}\mapsto \boldsymbol{ay}+\boldsymbol{b}+\boldsymbol{p}y^2f,\end{equation}

\noindent where $\boldsymbol{a}, \boldsymbol{b}\in \mathcal{O}_{\boldsymbol{C}}$ are liftings of $a, b$ and $f\in \mathcal{O}_{U\cap V}[y]$.

\smallskip Suppose we are given a lifting $F_{\boldsymbol{X}}$ of $F_X$, then $F_{\boldsymbol{X}}$ induces a lifting of the Frobenius of any open subscheme of it. In particular, we are given a lifting $F_{\boldsymbol{Y_U}}$ of $F_{Y_U}$ (resp. $F_{\boldsymbol{Y_V}}$ of $F_{Y_V}$). Now given $\boldsymbol{\lambda}\in\mathcal{O}_{\boldsymbol{U}}$, then the image of $\boldsymbol{\lambda}$ under $F_{\boldsymbol{Y_U}}$ can be written uniquely as $\sum_iF_i(\boldsymbol{\lambda})\boldsymbol{x}^i,\ F_i(\boldsymbol{\lambda})\in\mathcal{O}_{\boldsymbol{U}}.$ Moreover, for $i\geq1$, we have $pF_i(\boldsymbol{\lambda})=0$. One can see easily the function $F_0:\mathcal{O}_{\boldsymbol{U}}\rightarrow \mathcal{O}_{\boldsymbol{U}}$ defines a lifting of $F_U$. Similarly, let $\boldsymbol{\mu}\in\mathcal{O}_{\boldsymbol{V}}$, if we write the image of $\boldsymbol{\mu}$ under $F_{\boldsymbol{X}}$ as
$\sum_iG_i(\boldsymbol{\mu})\boldsymbol{y}^i$ with $\ G_i(\boldsymbol{\mu})\in\mathcal{O}_{\boldsymbol{V}}$, then $G_0$ defines a lifting of $F_V$.

\smallskip Now given a section $\lambda\in\mathcal{O}_{\boldsymbol{U}\cap\boldsymbol{V}}$, by (\ref{trs}) we have
$$\sum_iF_i(\boldsymbol{\lambda})(\boldsymbol{ay+b}+\boldsymbol{p}y^2f)^i=\sum_iG_i(\boldsymbol{\lambda})\boldsymbol{y}^i$$
\noindent By comparing the terms with degree 0 on the two sides we get \begin{equation}\label{trs2}\sum_iF_i(\boldsymbol{\lambda})\boldsymbol{b}^i=\sum_iG_i(\boldsymbol{\lambda}).\end{equation}
\noindent As we note above, $pF_i(\boldsymbol{\lambda})=0$ so $p\sum_{i\geq1}F_i(\boldsymbol{\lambda})\boldsymbol{b}^i=0$ hence the left side of the above equality can be written as $F_0(\boldsymbol{\lambda})+\boldsymbol{p}\eta(\lambda)$, where the function $\eta:\mathcal{O}_{U}\rightarrow\mathcal{O}_{U}$ is uniquely defined by $\boldsymbol{p}\eta(\lambda)=\sum_{i\geq1}F_i(\boldsymbol{\lambda})\boldsymbol{b}^i$. Moreover, as $F_0$ and $G_0$ are liftings of $F_{U\cap V}$, therefore the restriction of $\eta$ to $\mathcal{O}_{U\cap V}$ satisfies the condition listed in Corollary \ref{rcor}. As the base scheme $U$ is integral $\eta$ as a function from $\mathcal{O}_U$ to itself also satisfies these conditions. By applying Corollary \ref{rcor} again, $F_0(\boldsymbol{\lambda})+\boldsymbol{p}\eta(\lambda)$ is also a lifting of $F_U$ hence we are given a lifting of $F_C$ and the proposition is proved.

\end{proof}

\smallskip  If the base curve is $\mathbb{P}^1$, then $X$ is a toric surface, hence endowed with a lifting of Frobenius to $W_2(k)$. Next we consider the case when $C$ is an ordinary elliptic curve.

\smallskip Given a lifting $\boldsymbol{C}$ of $C$ to $W_2(k)$ and a lifting $\boldsymbol{E}$ of $E$ over $\boldsymbol{C}$, then it is easy to see $\boldsymbol{X}:=\mathbb{P}(\boldsymbol{E})$ is a lifting of $X$. Next we will prove the Frobenius of $X$ can be lifted to $\boldsymbol{X}$.

\begin{pro} Let $C$ be an ordinary elliptic curve over $k$, $E$ be a vector bundle of rank 2 over $C$, and $X=\mathbb{P}(E)$ be the associated ruled surface, then $F_X$ can be lifted to $W_2(k)$

\end{pro}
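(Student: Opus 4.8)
The plan is to construct a pair $(\boldsymbol X, F_{\boldsymbol X})$ by hand, using the chart description preceding Proposition \ref{bc} together with the parametrization of fibre-wise Frobenius lifts from Corollary \ref{deg}. Since $C$ is ordinary, $F_C$ lifts to $W_2(k)$ (this is forced by Proposition \ref{bc}); fix such a pair $(\boldsymbol C, F_{\boldsymbol C})$. Writing $E$ as an extension $0\to\mathcal O_C\to E\to L\to0$ as in $(\ref{ex3})$, I would lift it to an extension $0\to\mathcal O_{\boldsymbol C}\to\boldsymbol E\to\boldsymbol L\to0$ over $\boldsymbol C$: a lift $\boldsymbol L$ of $L$ exists since $H^2(C,\mathcal O_C)=0$, and the extension class lifts for the same reason. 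With this choice the transition matrix of $\boldsymbol E$ is upper triangular $\bigl(\begin{smallmatrix}\boldsymbol a&\boldsymbol b\\0&1\end{smallmatrix}\bigr)$, so the gluing $(\ref{trs})$ of $\boldsymbol X=\mathbb P(\boldsymbol E)$ has vanishing correction term, i.e. $\boldsymbol x=\boldsymbol a\boldsymbol y+\boldsymbol b$ exactly.

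Next I would make the obstruction explicit. By Corollary \ref{deg}, over $X_U=\mathbb P^1_{\boldsymbol U}$ and $X_V=\mathbb P^1_{\boldsymbol V}$ a lift of $F$ with base Frobenius $F_{\boldsymbol C}$ is given by $F_{\boldsymbol X}(\boldsymbol x)=\boldsymbol x^p+\boldsymbol p\,P_U(x)$ and $F_{\boldsymbol X}(\boldsymbol y)=\boldsymbol y^p+\boldsymbol p\,P_V(y)$ with $P_U,P_V$ of degree $\le 2p$ (the bound being exactly the condition for extension across the two remaining charts). Substituting $\boldsymbol x=\boldsymbol a\boldsymbol y+\boldsymbol b$, using $\boldsymbol p^2=0$ together with Lemma \ref{mp}, and writing $F_{\boldsymbol C}(\boldsymbol a)=\boldsymbol a^p+\boldsymbol p\alpha$, $F_{\boldsymbol C}(\boldsymbol b)=\boldsymbol b^p+\boldsymbol p\beta$, the two local lifts agree on the overlap if and only if
\[P_U(ay+b)-a^pP_V(y)=\alpha y^p+\beta-C_p(ay,b),\]
where $C_p$ is the Witt carry defined by $(U+V)^p=U^p+V^p+\boldsymbol p\,C_p(U,V)$, of degree $\le p-1$. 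Thus lifting $F_X$ over the fixed $F_{\boldsymbol C}$ is equivalent to solving this equation for $P_U\in\mathcal O(U)[x]$, $P_V\in\mathcal O(V)[y]$ of degree $\le 2p$; equivalently, the right-hand side must be a \v{C}ech coboundary, i.e. its class must vanish in $H^1(C,\mathcal V)$, where $\mathcal V=\pi_*F_{X/C}^*T_{X'/C}$ is a degree-$0$, rank-$(2p+1)$ bundle (here $X'=X\times_{C,F_C}C$ is the Frobenius twist and $F_{X/C}\colon X\to X'$ the relative Frobenius) whose associated graded is $\bigoplus_{j=0}^{2p}L^{j-p}$. This is exactly the Proposition \ref{MS} obstruction $H^1(X,T_X\otimes B^1_X)$ made concrete, and $H^1(C,\mathcal V)$ is in general nonzero, so I cannot argue by vanishing of the whole space.

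The role of ordinariness is to kill the top component. The term $\alpha y^p$ lives in the graded piece $L^{p-p}=\mathcal O_C$, and its class $\alpha\in H^1(C,\mathcal O_C)$ measures $F_{\boldsymbol C}^*\boldsymbol L\otimes\boldsymbol L^{-p}$. Replacing the lift $\boldsymbol L$ by $\boldsymbol L\otimes(1+\boldsymbol p\theta)$ for $\theta\in H^1(C,\mathcal O_C)$ leaves $\boldsymbol L^{\otimes p}$ unchanged (since $\boldsymbol p\cdot p=0$) but changes $\alpha$ into $\alpha+F^*\theta$, where $F^*$ is the Hasse--Witt operator on $H^1(C,\mathcal O_C)$. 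Because $C$ is ordinary, $F^*$ is bijective, so I can choose $\boldsymbol L$ with $\alpha=0$, i.e. $F_{\boldsymbol C}^*\boldsymbol L\cong\boldsymbol L^{\otimes p}$. This is precisely where ordinariness enters, matching its necessity in Proposition \ref{bc}.

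The hard part will be the residual equation $P_U(ay+b)-a^pP_V(y)=\beta-C_p(ay,b)$, whose class lies in the sub-bundle with graded pieces $L^{-p},\dots,L^{-1}$. For decomposable $E$ one normalizes $b=0$, whence $C_p(ay,0)=0$ and $\beta=0$, and the equation is solved by $P_U=P_V=0$; so the genuine difficulty is the indecomposable case, where $b$ represents a nonzero extension class. Here I would exploit the degrees $p+1,\dots,2p$ of $P_U$: since $(y+b)^{p+m}=(y^p+b^p)(y+b)^m$ in characteristic $p$, the high-degree coefficients of $P_U$ feed controlled powers of $b$ into the low-degree coefficients, which I would match against $C_p(ay,b)$ and $\beta$ degree by degree. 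I expect the crux to be showing that the Witt-carry cocycle $\{C_p(ay,b)\}$ together with $\{\beta\}$ is a coboundary for the degree-$\le 2p$ difference map --- equivalently, that the canonical $\mathcal O_C$-linear map $F_{\boldsymbol C}^*\boldsymbol E\to\mathrm{Sym}^p\boldsymbol E$ defining the relative Frobenius lifts over $W_2(k)$ --- and I anticipate that this again uses the global lift $F_{\boldsymbol C}$ and, on the $\mathcal O_C$-isotypic contributions, the invertibility of the Hasse--Witt map.
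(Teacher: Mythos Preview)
Your setup through the displayed coboundary equation
\[
P_U(ay+b)-a^pP_V(y)=\alpha y^p+\beta-C_p(ay,b)
\]
is correct and matches the paper's chart description, but from this point on you take a much harder route than the paper does, and you leave the decisive step unfinished.

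The paper's proof is a direct one-line construction rather than an obstruction computation. It simply sets $P_U=0$, i.e.\ $F_{\boldsymbol X}(\boldsymbol x)=\boldsymbol x^p$ over $\boldsymbol U$, and then computes over the overlap that
\[
F_{\boldsymbol X}(\boldsymbol y)=\frac{(\boldsymbol a\boldsymbol y+\boldsymbol b)^p-F_{\boldsymbol C}(\boldsymbol b)}{F_{\boldsymbol C}(\boldsymbol a)}=\boldsymbol y^p+\boldsymbol p\,h,\qquad \deg_y h\le p.
\]
Since $p\le 2p$, Corollary~\ref{deg} immediately gives the extension across the remaining $\boldsymbol t$- and $\boldsymbol s$-charts; no degree-by-degree matching, no Hasse--Witt normalization of $\alpha$, no analysis of the Witt carry is needed. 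Ordinariness enters \emph{only} through the existence of $F_{\boldsymbol C}$, not via the bijectivity of the Hasse--Witt operator on $H^1(\mathcal O_C)$ as you propose.

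Concretely, your planned programme---using the high coefficients $p+1,\dots,2p$ of $P_U$ to absorb $C_p(ay,b)$ and $\beta$, and showing the resulting \v Cech class in your bundle $\mathcal V$ vanishes---is left at the level of ``I expect'' and ``I anticipate''. That is a genuine gap: you have not produced $(P_U,P_V)$, and the vanishing of the relevant class is not established. By contrast, the paper sidesteps the entire cohomological question by exhibiting an explicit lift. (Your more careful bookkeeping does flag a point the paper treats tersely, namely that the computed $h$ lives a priori only over $U\cap V$; but this does not rescue your argument, which still needs the missing ``crux'' to conclude.)

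In short: your obstruction-theoretic framing is sound, but you have overlooked the elementary choice $P_U=0$ that the paper makes, and the hard step you defer is exactly the step the paper avoids altogether.
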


\begin{proof} Following the notations used earlier in this subsection, let $\boldsymbol{U}, \boldsymbol{V}$ be open subschemes of $\boldsymbol{X}$ with underlying open subset $U$ and $V$, $\boldsymbol{e}, \boldsymbol{f_U}, \boldsymbol{f_V}$ be sections of $\boldsymbol{E}$ lifting $e, f_U$ and $f_V$
respectively. Furthermore, let $\boldsymbol{t}=\frac{\boldsymbol{e}}{\boldsymbol{f_U}}$, $\boldsymbol{s}=\frac{\boldsymbol{e}}{\boldsymbol{f_V}}$, $\boldsymbol{x}=\frac{\boldsymbol{f_U}}{\boldsymbol{e}}$ and $\boldsymbol{y}=\frac{\boldsymbol{f_V}}{\boldsymbol{e}}$, then $\boldsymbol{X}$ is covered by the open subschemes $\mathrm{Spec}\ \mathcal{O}_{\boldsymbol{U}}[\boldsymbol{t}]$, $\mathrm{Spec}\ \mathcal{O}_{\boldsymbol{U}}[\boldsymbol{x}]$, $\mathrm{Spec}\ \mathcal{O}_{\boldsymbol{V}}[\boldsymbol{s}]$ and $\mathrm{Spec}\ \mathcal{O}_{\boldsymbol{V}}[\boldsymbol{y}]$. Moreover, the gluing rules between these open subschemes are given by $\boldsymbol{tx}=1$, $\boldsymbol{sy}=1$ and $\boldsymbol{x}=\boldsymbol{ay}+\boldsymbol{b}$.

\smallskip We fix a lifting $F_{\boldsymbol{C}}:\boldsymbol{C}\rightarrow \boldsymbol{C}$, and define a lifting of $F_X$ over $\mathrm{Spec}\,\mathcal{O}_{\boldsymbol{U}}[\boldsymbol{x}]$ by sending $\boldsymbol{x}$ to $\boldsymbol{x}^p$. We take $f$ to be 0, then by the relation $\boldsymbol{x}=\boldsymbol{ay}+\boldsymbol{b}$, the image of $\boldsymbol{y}$ under the lifted Frobenius is given by
$$F_{\boldsymbol{X}}(\boldsymbol{y})=F_{\boldsymbol{X}}\left(\frac{\boldsymbol{x-b}}{\boldsymbol{a}}\right)=\frac{(\boldsymbol{ay+b})^p-F_{\boldsymbol{C}}(\boldsymbol{b})}{F_{\boldsymbol{C}}(\boldsymbol{a})}.$$

\noindent The right side of the above equality can be written as $\boldsymbol{y}^p+\boldsymbol{p}h$, $h\in \mathcal{O}_{U\cap V}[y]$ and it is easy to see $\deg h\leq p$. By Corollary \ref{deg} this lifting of Frobenius over $\mathrm{Spec}\,\mathcal{O}_{\boldsymbol{U}}[\boldsymbol{x}]$ (resp. $\mathrm{Spec}\,\mathcal{O}_{\boldsymbol{V}}[\boldsymbol{y}]$) can be extended to $\mathrm{Spec}\,\mathcal{O}_{\boldsymbol{U}}[\boldsymbol{t}]$ (resp. $\mathrm{Spec}\,\mathcal{O}_{\boldsymbol{V}}[\boldsymbol{s}]$). It is easy to see these liftings glue well hence the proposition is proved.
\end{proof}

\medskip
\subsection*{Acknowledgement:} The author would like to thank Prof. Xiaotao Sun for a careful reading of a draft of this work.

\bibliographystyle{amsplain}
\bibliography{xhBibTex}

\providecommand{\bysame}{\leavevmode\hbox to3em{\hrulefill}\thinspace}
\providecommand{\MR}{\relax\ifhmode\unskip\space\fi MR }
\providecommand{\MRhref}[2]{%
  \href{http://www.ams.org/mathscinet-getitem?mr=#1}{#2}
}
\providecommand{\href}[2]{#2}
\begin{thebibliography}{10}

\bibitem{beauville1996complex}
A.~Beauville, \emph{Complex algebraic surfaces}, vol.~34, Cambridge University
  Press, 1996.

\bibitem{beauville2001endomorphisms}
\bysame, \emph{Endomorphisms of hypersurfaces and other manifolds},
  International Mathematics Research Notices \textbf{2001} (2001), no.~1,
  53--58.

\bibitem{blass1982unirationality}
P.~Blass, \emph{Unirationality of enriques surfaces in characteristic two},
  Compositio Mathematica \textbf{45} (1982), no.~3, 393--398.

\bibitem{bogomolov2011constructing}
Fedor Bogomolov, Brendan Hassett, and Yuri Tschinkel, \emph{Constructing
  rational curves on k3 surfaces}, Duke Mathematical Journal \textbf{157}
  (2011), no.~3, 535--550.

\bibitem{bombieri1976enriques}
E.~Bombieri and D.~Mumford, \emph{Enriques' classification of surfaces in char.
  $p$, iii}, Inventiones mathematicae \textbf{35} (1976), no.~1, 197--232.

\bibitem{BMII1977}
\bysame, \emph{Enriques' classification of surfaces in char. p ii}, Complex
  Analysis and algebraic Geometry, Collected Papers dediccated to K. Kodaira,
  Iwanami Shoten, Tokyo. (1977), 23--42.

\bibitem{buch1997frobenius}
A.~Buch, J.~F Thomsen, N.~Lauritzen, and V.~Mehta, \emph{The frobenius morphism
  on a toric variety}, Tohoku Mathematical Journal \textbf{49} (1997),
  355--366.

\bibitem{cossec1989enriques}
F.~R. Cossec and I.V. Dolgachev, \emph{Enriques surfaces}, vol.~1,
  Birkh{\"a}user Boston, 1989.

\bibitem{crew1984etale}
R.~Crew, \emph{Etale $p$-covers in characteristic $p$}, Compositio Mathematica
  \textbf{52} (1984), no.~1, 31--45.

\bibitem{dupuy2012positivity}
T.~Dupuy, \emph{Positivity and lifts of the frobenius}, preprint (2012).

\bibitem{hahn1996quadruple}
D.~W. Hahn and R.~Miranda, \emph{Quadruple covers of algebraic varieties},
  Ph.D. thesis, Colorado State University, 1996.

\bibitem{hartshorne1977algebraic}
R.~Hartshorne, \emph{Algebraic geometry}, vol.~52, Springer-Verlag, 1977.

\bibitem{illusie2002frobenius}
L.~Illusie, \emph{Frobenius and hodge degeneration}, Introduction to Hodge
  theory \textbf{8} (2002), 99--149.

\bibitem{illusie2005grothendieck}
\bysame, \emph{Grothendieck's existence theorem in formal geometry with a
  letter of jean-pierre serre}, Mathematical Surveys and Monographs
  \textbf{123} (2005), 179.

\bibitem{jensen1978picard}
S.F. Jensen, \emph{Picard schemes of quotients by finite commutative group
  schemes}, Mathematica Scandinavica \textbf{42} (1978), 197--210.

\bibitem{kunz1986kahler}
E.~Kunz, \emph{K{\"a}hler differentials, advanced lectures in mathematics,
  friedr}, Vieweg \& Sohn, Braunschweig (1986).

\bibitem{lang1979quasi}
W.~E. Lang, \emph{Quasi-elliptic surfaces in characteristic three}, Annales
  scientifiques de l'{\'E}cole Normale Sup{\'e}rieure \textbf{12} (1979),
  no.~4, 473--500.

\bibitem{lange1977vektorbundel}
H.~Lange and U.~Stuhler, \emph{Vektorb{\"u}ndel auf kurven und darstellungen
  der algebraischen fundamentalgruppe}, Mathematische Zeitschrift \textbf{156}
  (1977), no.~1, 73--83.

\bibitem{mehta1987varieties}
V.B. Mehta and V~Srinivas, \emph{Varieties in positive characteristic with
  trivial tangent bundle}, Compositio Mathematica \textbf{64} (1987), no.~2,
  191--212.

\bibitem{miranda1985triple}
R.~Miranda, \emph{Triple covers in algebraic geometry}, American Journal of
  Mathematics \textbf{107} (1985), no.~5, 1123--1158.

\bibitem{paranjape1989self}
K.~H. Paranjape and V.~Srinivas, \emph{Self maps of homogeneous spaces},
  Inventiones mathematicae \textbf{98} (1989), no.~2, 425--444.


\end{thebibliography}

\end{document}